\def\eqnarray{\stepcounter{equation}\let\@currentlabel=\theequation
\global\@eqnswtrue
\tabskip\@centering\let\\=\@eqncr
$$\halign to \displaywidth\bgroup\hfil\global\@eqcnt\z@
  $\displaystyle\tabskip\z@{##}$&\global\@eqcnt\@ne
  \hfil$\displaystyle{{}##{}}$\hfil
  &\global\@eqcnt\tw@ $\displaystyle{##}$\hfil
  \tabskip\@centering&\llap{##}\tabskip\z@\cr}
\def\endeqnarray{\@@eqncr\egroup
      \global\advance\c@equation\m@ne$$\global\@ignoretrue}
\def\@yeqncr{\@ifnextchar [{\@xeqncr}{\@xeqncr[5pt]}}
\newtheorem{lemma}{Lemma}[section]
\newtheorem{thm}[lemma]{Theorem}
\newtheorem{cor}[lemma]{Corollary}
\newtheorem{prop}[lemma]{Proposition}
\begin{document}

\newcounter{tellerr}
\renewcommand{\thetellerr}{(\roman{tellerr})}
\newenvironment{tabeleq}{\begin{list}%
{\rm  (\roman{tellerr})\hfill}{\usecounter{tellerr} \leftmargin=1.1cm
\labelwidth=1.1cm \labelsep=0cm \parsep=0cm}
                         }{\end{list}}

\newcommand{\Ni}{\mathds{N}}
\newcommand{\Qi}{\mathds{Q}}
\newcommand{\Ri}{\mathds{R}}
\newcommand{\Ci}{\mathds{C}}
\newcommand{\Zi}{\mathds{Z}}

\newcommand{\RRe}{\mathop{\rm Re}}
\newcommand{\IIm}{\mathop{\rm Im}}
\newcommand{\Tr}{{\mathop{\rm Tr \,}}}

\newcommand{\one}{\mathds{1}}

\hyphenation{groups}
\hyphenation{unitary}

\newcommand{\cl}{{\cal L}}

\thispagestyle{empty}

\vspace*{1cm}
\begin{center}
{\Large\bf Analytical aspects of isospectral drums} \\[4mm]

\large W. Arendt$^1$, A.F.M. ter Elst$^2$ and J.B. Kennedy$^1$

\end{center}

\vspace{4mm}

\begin{center}
{\bf Abstract}
\end{center}

\begin{list}{}{\leftmargin=1.8cm \rightmargin=1.8cm \listparindent=10mm 
   \parsep=0pt}
\item
We reexamine the proofs of isospectrality of the counterexample domains to Kac' 
question `Can one hear the shape of a drum?' from an analytical viewpoint.
We reformulate isospectrality in a more abstract setting as the existence of a 
similarity transform intertwining two operators associated with elliptic forms, 
and give several equivalent characterizations of this property as
intertwining the forms and form domains, the associated operators and operator 
domains, and the semigroups they generate.
On a representative pair of counterexample domains, we use these criteria to 
show that the similarity transform intertwines not only the Laplacians with Neumann 
(or Dirichlet) boundary conditions but also any two appropriately defined elliptic 
operators on these domains, even if they are not self-adjoint. 
However, no such transform can intertwine these operators 
if Robin boundary conditions are imposed instead of Neumann or Dirichlet.
We also remark on various operator-theoretic properties of such intertwining 
similarity transforms.
\end{list}

\vspace{6mm}

\noindent
AMS Subject Classification: 58J53, 35P05, 47F05, 35R30.

\vspace{6mm}

\noindent
{\bf Home institutions:}    \\[3mm]
\begin{tabular}{@{}cl@{\hspace{10mm}}cl}
1. & Institute of Applied Analysis & 
2. & Department of Mathematics  \\
& University of Ulm & 
  & University of Auckland  \\
& Helmholtzstr.\ 18  &
  & Private bag 92019 \\
& D-89069 Ulm  &
   & Auckland 1142 \\ 
& Germany  &
  & New Zealand \\[8mm]
\end{tabular}

\newpage
\setcounter{page}{1}

\section{Introduction} \label{Sdrumce1}

It took 30 years for Kac' famous question `Can one hear the shape of a drum?' \cite{Kac} 
to find an answer.
Gordon, Webb and Wolpert \cite{GWW} constructed two non-congruent planar domains 
whose Laplacians with Dirichlet (or Neumann) boundary conditions are isospectral, that is, 
they have the same sequence of eigenvalues, counted with multiplicities.
The standard counterexample takes the form of two polygons obtained by 
stitching together seven copies of a given non-equilateral triangle in two different ways.
These domains are manifestations in the plane of a general principle first enunciated by 
Sunada \cite{Sun1} and developed by B\'erard \cite{Ber1}, which to the best of our 
knowledge accounts for all known isospectral pairs, and which was used in \cite{GWW}.
Namely, if $H$ and $K$ are two subgroups of a finite group $G$, then a unitary 
intertwining operator between the spaces $L_2(H\setminus G)$ and $L_2(K\setminus G)$ 
induces an isometry between appropriate subspaces of any Hilbert space on which $G$ 
acts unitarily.
Subsequent to the publication of \cite{GWW}, several mathematicians, for example 
B\'erard \cite{Ber2}, Buser--Conway--Doyle--Semmler \cite{BCDS} and Chapman 
\cite{Cha}, gave simplified and more accessible proofs of the isospectrality of such 
domains.
The argument in all these expository proofs consists in 
showing that an eigenfunction on the first polygon
can be transposed to an eigenfunction on the second by taking particular linear 
combinations of its values on the seven equal constituent triangles, and vice versa.

Following the approach taken by B\'erard \cite{Ber2}, 
if we consider $L_2(\Omega_1)$ and $L_2(\Omega_2)$ rather as $L_2(T)^7$, 
where $T$ is the basic triangle (`brique fondamentale') and $\Omega_1$ and 
$\Omega_2$ the polygons (see Figure~\ref{fdrumce1}), 
\begin{figure}[ht!] \label{fdrumce1}         
\vspace*{5mm} 
\centering
\epsfig{file=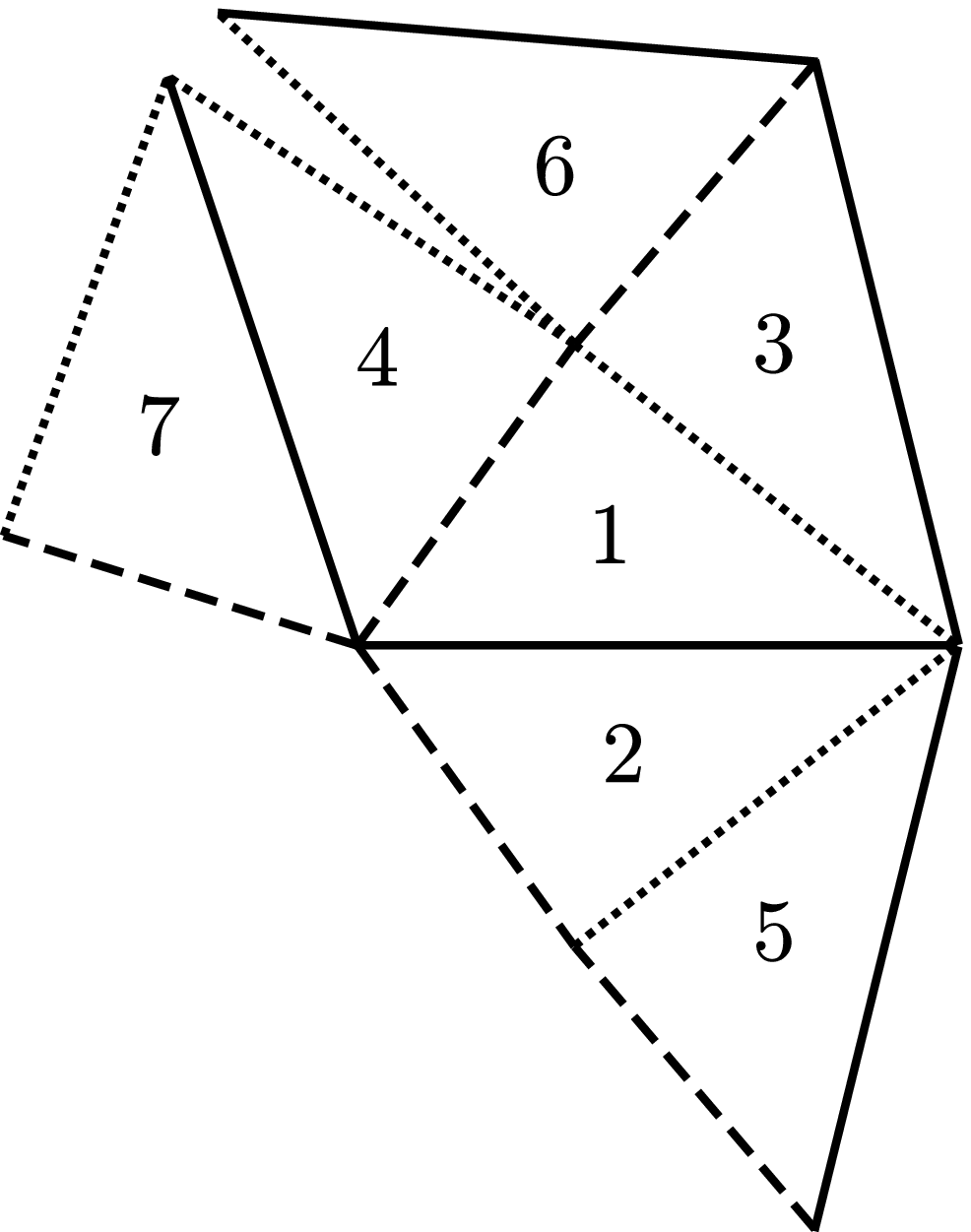, height=50mm} \hspace{25mm}
\epsfig{file=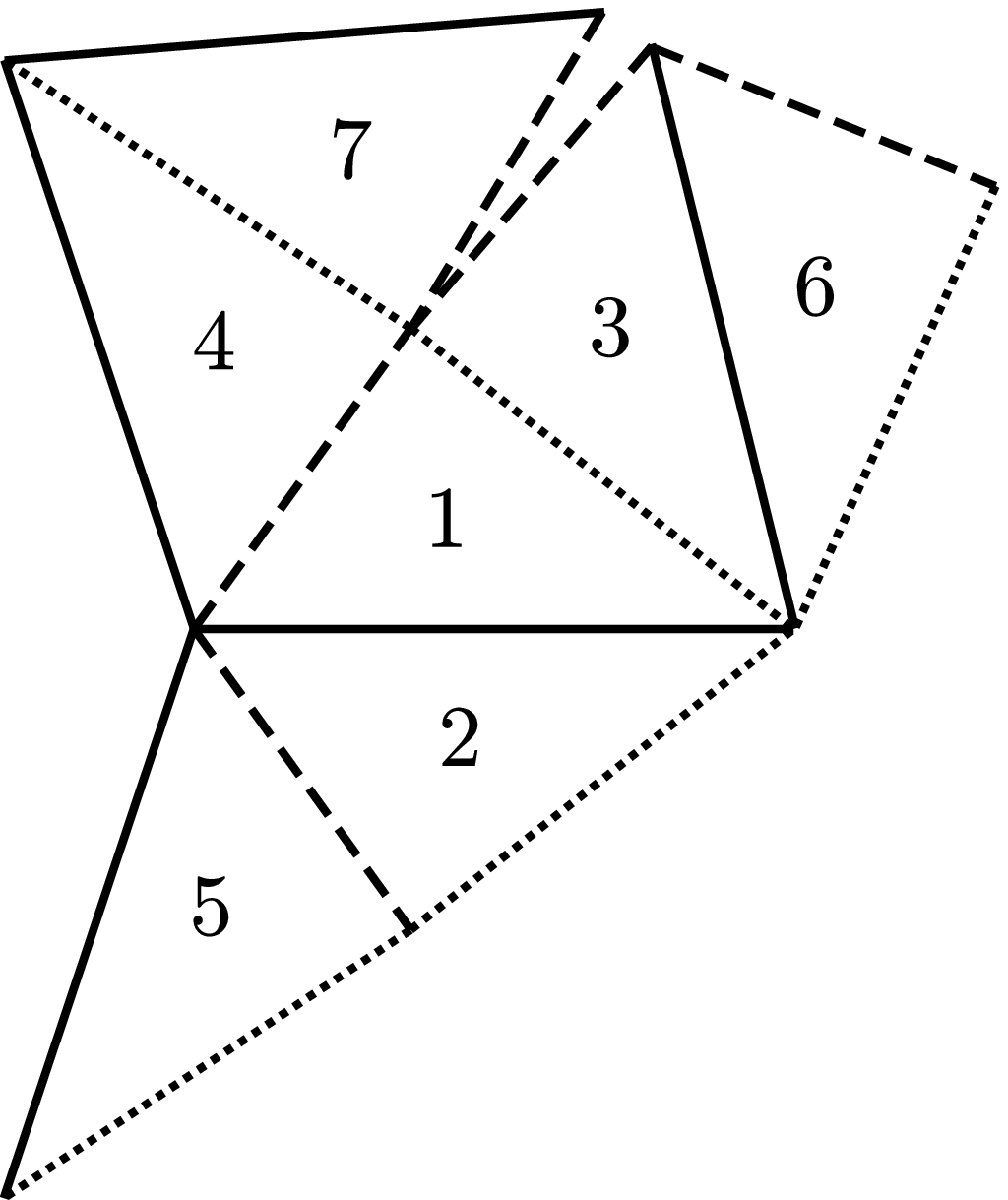, height=50mm}
\mbox{}
\makebox[0pt]{\raisebox{10mm}[0pt][0pt]{\mbox{}\hspace*{-210mm}$\Omega_1$ }}
\makebox[0pt]{\raisebox{10mm}[0pt][0pt]{\mbox{}\hspace*{-20mm}$\Omega_2$ }}
\caption{Two isospectral domains composed of seven isometric triangles. These are based 
on the `warped propeller' domains of \cite{BCDS}.}
\end{figure}
then we can construct an isometry $\Phi$ on $L_2(T)^7$ induced by a $7 \times 7$ matrix 
$B$ of scalars acting as a family of Euclidean isometries superimposing the seven triangles.
The core of the argument is that $\Phi$ restricts to an isometry mapping the Sobolev 
space $H^1_0(\Omega_1)$ onto $H^1_0(\Omega_2)$, whilst its adjoint $\Phi^*$
maps $H^1_0(\Omega_2)$ onto $H^1_0(\Omega_1)$.
Isospectrality of the Laplacians then follows from the variational characterization of 
the eigenvalues.
The later works of Buser {\it{et al}} \cite{BCDS} and Chapman \cite{Cha} motivate and 
describe in lay terms how $\Phi$ acts purely as a map between eigenfunctions, without 
touching upon the concept of isometric Sobolev spaces.

The aim of this paper is to reconsider these arguments from a more 
analytical perspective.
Rather than transposing eigenfunctions, we construct $\Phi$ as a similarity 
transform intertwining the realizations of the Laplacian with 
Neumann (or Dirichlet) boundary conditions, 
or equivalently, the semigroups generated by these realizations, 
on the respective polygons.
Moreover, we consider the operator-theoretic properties of such a transform $\Phi$ 
more carefully.
We will give a general characterization of maps $\Phi$ that intertwine any two 
operators associated with elliptic forms.
In light of this characterization it looks like a miracle that there exists 
a matrix which fulfils the criterion, but the key point is rather that $\Phi$ 
and its adjoint respect the form domains $H^1(\Omega_1)$ and $H^1(\Omega_2)$.
That the transform $\Phi$ intertwines the elliptic forms implies 
that it also intertwines the associated operators and semigroups.
In the case of the Laplacian, this is then equivalent to the isospectral property.
But it is not necessary that we consider only the Laplacian: since the Sobolev spaces 
are intertwined, any elliptic operator on $L_2(T)$, even if it is not self-adjoint, will yield 
two operators on $L_2(\Omega_1)$ and $L_2(\Omega_2)$ which are similar.
In place of isospectrality, the correct setting is now that of similarity, a stronger 
property in the non-self-adjoint case.

We can consider the Laplacian with Robin boundary conditions in this setting.
The question as to whether there exist isospectral pairs for the third boundary condition 
just as for the first and second seems to be a natural one, and was briefly mentioned in 
the survey article \cite{Pro}; but otherwise it 
appears to have received little attention, and no answer.
We will show that any operator acting as a family of superimposing 
isometries that intertwines the Robin Laplacians on $\Omega_1$ and $\Omega_2$ 
must also simultaneously intertwine the Dirichlet {\em and} Neumann Laplacians, 
which is easily shown to be impossible.
Thus there is no reason to suppose that any known pairs of domains which are 
Dirichlet or Neumann isospectral are also Robin isospectral, and it is an open 
question as to whether there exists {\em any} noncongruent pair of Robin isospectral domains.
The striking implication is that isospectrality could be essentially related to the 
boundary conditions and not the coefficients of the operators being intertwined.
Thus it may well be the case that one can hear the shape of a 
drum after all, if one loosens the membrane before striking it.

There is another motivation for studying similarity transforms within our framework.
It has been shown (see \cite{Are3}, and cf.~also \cite{ABE} and \cite{AE4} 
for the case of Riemannian manifolds) that two (Lipschitz) domains are 
necessarily congruent if there exists an order isomorphism 
intertwining the Laplacians.
Thus, in our case, the similarity transform $\Phi$ is not an order 
isomorphism, even though, at least in the case of Neumann boundary 
conditions, $\Phi$ may be taken as a positive linear map.
What goes wrong is that $\Phi$ is no longer disjointness preserving, 
as on each triangle it adds (the function values on) several distinct 
triangles together; thus $\Phi$ may be written as a finite sum of order 
isomorphisms, and due to this `mixing' property, $\Phi^{-1}$ is not positive.
Understanding and seeking to narrow the operator-theoretic gap between 
the characterization of such positive results as in \cite{Are3} and the negative 
counterexamples may help us to understand Kac' problem better, as well as offering 
an alternative approach to the standard one via heat and wave traces.

In fact, a version of Kac' question is still open.
The results here, just as those of \cite{GWW} and the other expositions, 
can be interpreted as saying that these seven triangles can be put together 
in two different ways to induce isomorphic Sobolev spaces, which is of course 
the essential idea behind B\'erard's version of Sunada's Theorem. 
With trivial and obvious modifications, the same is true for all 
other known counterexamples as presented in \cite{BCDS} (which are all based 
on Sunada's Theorem in the same way, and which can all be analyzed within our 
framework in the obvious way).
But the point is that the phenomenon exhibited by these domains is 
somehow exceptional and does not really answer Kac' question.
If we interpret the `correct' setting for Kac' question as being 
$C^\infty$-domains in the plane, then the question is 
still wide open; there is no known counterexample among $C^1$ or 
convex planar domains.
In four dimensions there is a counterexample of two non-congruent 
convex domains, given by Urakawa \cite{Ura} in 1982, which was in fact 
the first Euclidean example.
However, the issue of regularity of the boundary seems to be far more 
than a technicality, a point also made in the survey article of Protter
\cite{Pro}.
While it is certainly clear that any counterexamples generated via the 
principle of Sunada's Theorem must have corners, there are 
also remarkable and profound positive results obtained by 
Zelditch \cite{Zel} \cite{Zel2}, who proves that, within a certain class of 
domains in $\Ri^2$ with analytic boundary and certain symmetry conditions, 
any two isospectral domains are congruent. 
The presence of corners in a domain also has significant consequences for the 
asymptotic behaviour of the eigenvalues; for example, the curvature of 
the boundary appears in all terms of the asymptotic expansion of the 
heat kernel about $t=0$ (see, e.g., \cite{Wat}).

This article is organized as follows.
We start in Section~\ref{Sdrumce2} by characterizing operators which 
intertwine two semigroups generated by sectorial forms, and relating 
this to the isospectral property.
We phrase many of our results in the language of semigroups, as this 
allows us to work on $L_2$-spaces in place of the more abstruse operator domains. 
In Section~\ref{Sdrumce3} we recast the arguments given in \cite{Ber2} and \cite{BCDS} 
within this framework, showing first how we can decompose the large domains 
$\Omega_1$ and $\Omega_2$ into their constituent triangles, and give conditions 
allowing us to merge the associated Sobolev spaces together.
In this setting we then 
prove that realizations of the Neumann Laplacian on the two non-congruent polygons 
in Figure~\ref{fdrumce1} are similar. 
We work principally with the Neumann case as there are fewer conditions on the 
Sobolev spaces involved, and as the similarity transform and associated matrix have the 
particularly nice property that they may be taken to be positive.
In Section~\ref{Sdrumce4} we discuss properties of the intertwining operator $\Phi$ 
constructed in Section~\ref{Sdrumce3} from a more analytical, operator-theoretic 
perspective. 
The results in Section~\ref{Sdrumce3} are extended to more general elliptic 
operators in Section~\ref{Sdrumce5}.
We then consider Dirichlet boundary conditions in Section~\ref{Sdrumce6}.
The underlying ideas are the same, but the details of the construction 
turn out to be a little more complicated than in the Neumann case.
We therefore limit ourselves to indicating the differences 
vis-\`a-vis the Neumann Laplacian.
Finally, in Section~\ref{Sdrumce7}, we show that these arguments cannot be 
extended to Robin boundary conditions.

\section{Forms and intertwining operators} \label{Sdrumce2}

We start by introducing some basic terms and results from the theory of 
sectorial forms.
The idea is to consider equivalent formulations of isospectrality for the 
Dirichlet and Neumann Laplacians which are more suitable for adaptation 
to more general operators.
To that end, let $H$ and $V$ be complex Hilbert spaces such that 
$V$ is densely embedded in $H$.
Let $a \colon V \times V \to \Ci$ be a continuous sesquilinear form.
Assume that $a$ is {\bf elliptic}, that is, there exist $\omega \in \Ri$
and $\mu > 0$ such that 
\begin{equation}
\RRe a(u,u) + \omega \, \|u\|_H^2 
\geq \mu \, \|u\|_V^2
\label{eSdrumce2;1}
\end{equation}
for all $u \in V$.
Denote by $A$ the operator associated with $a$.
That is, the domain of $A$ is given by
\[
D(A) = \{ u \in V : \mbox{there exists an $f \in H$ such that }
    a(u,v) = (f,v)_H \mbox{ for all } v \in V \}, 
\]
and $A u = f$ for all $u \in D(A)$ and $f \in H$ such that 
$a(u,v) = (f,v)_H$ for all $v \in V$.
Then $-A$ generates a holomorphic semigroup on $H$.

We are of course particularly interested in the Dirichlet and Neumann Laplacians 
on $H = L_2(\Omega)$, where $\Omega \subset \Ri^d$ is an open set with 
finite measure.
These are self-adjoint operators with compact resolvent, and can 
be characterized as follows.
We omit the standard proof.

\begin{prop} \label{pdrumce200}
Let $A$ be an operator in a separable infinite dimensional Hil\-bert space~$H$.
The following are equivalent.
\begin{tabeleq}
\item \label{pdrumce200-1}
$A$ is self-adjoint, bounded from below and has compact resolvent.
\item \label{pdrumce200-2}
There exist a Hilbert space $V$ which is densely and compactly embedded in 
$H$ and a symmetric, continuous elliptic form $a \colon  V\times V \to \Ci$ such that 
$A$ is associated with $a$.
\item \label{pdrumce200-3}
There exist an orthonormal basis $(e_n)_{n \in \Ni}$ of $H$ and an
increasing sequence of real numbers $(\lambda_n)_{n \in \Ni}$ with $\lim_{n\to\infty} 
\lambda_n =\infty$ such that
\[
D(A)=\{u\in H: \sum_{n=1}^\infty |\lambda_n \, (u,e_n)_H|^2 < \infty\}
\]
and $Au = \sum_{n=1}^\infty \lambda_n \, (u,e_n)_H \, e_n$ for all $u \in D(A)$.
\end{tabeleq}
\end{prop}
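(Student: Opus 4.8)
The plan is to prove the cyclic chain $\mathrm{(i)} \Rightarrow \mathrm{(iii)} \Rightarrow \mathrm{(ii)} \Rightarrow \mathrm{(i)}$, whose only substantial analytic ingredient is the spectral theorem for compact self-adjoint operators. For $\mathrm{(i)} \Rightarrow \mathrm{(iii)}$ I would use that $A$ is bounded from below to fix $\omega \in \Ri$ with $A + \omega \geq I$; then $R := (A+\omega)^{-1}$ is a bounded, self-adjoint, strictly positive operator, and it is compact by hypothesis. The spectral theorem furnishes an orthonormal basis $(e_n)_{n \in \Ni}$ of eigenvectors of $R$ with eigenvalues $\mu_n > 0$ and $\mu_n \to 0$. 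Setting $\lambda_n := \mu_n^{-1} - \omega$ and relabelling so the sequence increases gives $\lambda_n \to \infty$, and the stated formulae for $D(A)$ and $Au$ follow by carrying the functional calculus of $R$ back to $A$.

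For $\mathrm{(iii)} \Rightarrow \mathrm{(ii)}$ I would construct the data directly. Since $(\lambda_n)$ is bounded below, choose $\omega \in \Ri$ with $\lambda_n + \omega \geq 1$ for all $n$, set
\[
V := \Big\{ u \in H : \sum_{n=1}^\infty (\lambda_n + \omega)\, |(u,e_n)_H|^2 < \infty \Big\}
\]
with inner product $(u,v)_V := \sum_n (\lambda_n + \omega)\,(u,e_n)_H\,\overline{(v,e_n)_H}$, and define $a(u,v) := \sum_n \lambda_n\,(u,e_n)_H\,\overline{(v,e_n)_H}$ on $V \times V$. Then $a$ is symmetric because the $\lambda_n$ are real, continuous because $\sup_n |\lambda_n|/(\lambda_n+\omega) < \infty$, and elliptic because $\RRe a(u,u) + \omega\,\|u\|_H^2 = \|u\|_V^2$. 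The space $V$ is dense in $H$ since it contains all finite linear combinations of the $e_n$, and the embedding $V \hookrightarrow H$ is compact, being the operator-norm limit of the finite-rank truncations onto $e_1,\dots,e_N$ since the tail weights $(\lambda_n+\omega)^{-1}$ tend to zero. It then remains to identify the operator associated with $a$ as $A$.

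For $\mathrm{(ii)} \Rightarrow \mathrm{(i)}$ I would appeal to standard form theory: symmetry of $a$ makes the associated operator self-adjoint, ellipticity makes it bounded from below, and compactness of $V \hookrightarrow H$ makes $(A+\omega)^{-1} \colon H \to V \hookrightarrow H$ compact as the composition of a bounded map into $V$ with the compact inclusion. I expect the only point requiring genuine care to be the identification in $\mathrm{(iii)} \Rightarrow \mathrm{(ii)}$ that the operator associated with the constructed form has \emph{exactly} the domain $D(A)$ of $\mathrm{(iii)}$: one must check that the summability condition $\sum |\lambda_n (u,e_n)_H|^2 < \infty$ both places $u$ in $V$ and produces the element $f = \sum_n \lambda_n (u,e_n)_H\, e_n$ representing $a(u,\cdot)$, so that no larger domain sneaks in. Everything else is bookkeeping.
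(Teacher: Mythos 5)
The paper explicitly omits the proof of this proposition as standard, so there is nothing to compare against; your cyclic argument (spectral theorem for the compact self-adjoint resolvent, explicit construction of $V$ and $a$ from the eigenbasis, and the usual form-method facts for the converse) is exactly the standard proof and is correct, including your correctly flagged point that one must verify the operator associated with the constructed form has precisely the domain in (iii).
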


If $A$ satisfies these equivalent conditions, we call $\lambda_n$ the {\bf $n$-th 
eigenvalue of $A$} and $(\lambda_n)_{n \in \Ni}$ the sequence of eigenvalues of $A$, 
where repetition is possible.

\medskip

Let us now assume that we have two forms $a_1$ and $a_2$, with dense
form domains $V_1$ and $V_2$ in Hilbert spaces $H_1$ and $H_2$, respectively. 
We assume throughout that both $a_1$ and $a_2$ are continuous and elliptic.
Let $A_1$ and $A_2$ be the operators associated with $a_1$ and $a_2$, which are 
automatically bounded from below thanks to the ellipticity assumption.
Denote by $S^1$ and $S^2$ the semigroups generated by 
$-A_1$ and $-A_2$.
If $A_1$ and $A_2$ also are self-adjoint and have compact resolvent then we call them {\bf isospectral} 
if they have the same sequence of eigenvalues.
In this case we will denote by $(e_n)_{n \in \Ni}$ the sequence of (normalized) eigenfunctions 
of $A_1$ on $H_1$ and by $(f_n)_{n \in \Ni}$ the similarly normalized eigenfunctions of 
$A_2$ on $H_2$.
It is then immediate that there exists a unitary operator $U \in \cl(H_1,H_2)$ such that
\begin{equation} \label{pdrumce201b}
U^{-1} \, S^2_t \, U = S^1_t,
\end{equation}
for all $t>0$.
We may simply choose $U$ such that $U e_n = f_n$ for all $n \in \Ni$.
If we assume $H_i = L_2(\Omega_i)$ for some open $\Omega_i \subset \Ri^d$
and $A_i$ is the Dirichlet (or Neumann) Laplacian on $\Omega_i$ for all $i \in \{ 1,2 \} $,
then Kac' question may be phrased as asking whether the existence of an intertwining 
operator as in (\ref{pdrumce201b}) implies the existence of an isometry 
$\tau \colon  \Omega_1 \to \Omega_2$. 

However, we wish to consider more general operator-theoretic notions 
than isospectrality, in particular allowing for non-self-adjoint operators.
Moreover, the similarity transform $\Phi$ that we construct in Section~\ref{Sdrumce3} 
is, in general, not unitary.
See also Section~\ref{Sdrumce4}. 
(This assertion is also true for the equivalent constructions in \cite{Ber2} and 
\cite{BCDS}, where the mechanism is of course the same.)
The next proposition gives a general characterization of an operator $\Phi \colon  H_1 \to H_2$ 
that intertwines $A_1$ and $A_2$ in terms of the forms $a_1$ and $a_2$.
Of particular interest to us in what follows is Condition~\ref{pdrumce202-3}.
We do not require $A_1$ or $A_2$ to be self-adjoint or have compact resolvent.

\begin{prop} \label{pdrumce202}
Let $\Phi \in \cl(H_1,H_2)$.
Consider the following conditions.
\begin{tabeleq}
\item \label{pdrumce202-1}
$S^2_t \, \Phi = \Phi \, S^1_t$ for all $t > 0$.
\item \label{pdrumce202-2}
$\Phi(D(A_1)) \subset D(A_2)$ and $A_2 \, \Phi u = \Phi \, A_1 u$
for all $u \in D(A_1)$.
\item \label{pdrumce202-3}
$\Phi(V_1) \subset V_2$, $\Phi^*(V_2) \subset V_1$ and 
$a_2(\Phi u, v) = a_1(u, \Phi^* v)$ for all $u \in V_1$ and $v \in V_2$.
\end{tabeleq}
Then {\rm \ref{pdrumce202-1}}$\Leftrightarrow${\rm \ref{pdrumce202-2}}$\Leftarrow${\rm \ref{pdrumce202-3}}.
\end{prop}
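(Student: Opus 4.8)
The plan is to establish (i)$\Leftrightarrow$(ii) directly from the characterization of the generator in terms of its semigroup, and then to deduce (iii)$\Rightarrow$(ii) by unwinding the definition of $D(A_2)$. For (i)$\Rightarrow$(ii), I would fix $u \in D(A_1)$ and use that $\frac{1}{t}\,(S^1_t u - u) \to -A_1 u$ in $H_1$ as $t \downarrow 0$. Applying the bounded operator $\Phi$ and invoking (i) gives $\frac{1}{t}\,(S^2_t \Phi u - \Phi u) = \Phi\,\frac{1}{t}\,(S^1_t u - u) \to -\Phi A_1 u$; by the very definition of the generator $-A_2$ this says precisely that $\Phi u \in D(A_2)$ and $A_2 \Phi u = \Phi A_1 u$, which is (ii).

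The converse (ii)$\Rightarrow$(i) is the step I expect to be the main obstacle, since it is the only one requiring genuine semigroup calculus rather than a one-line limit. Here I would fix $t > 0$ and $u \in D(A_1)$ and consider the $H_2$-valued function $g(s) = S^2_{t-s}\,\Phi\,S^1_s u$ on $[0,t]$. Using that $S^1$ leaves $D(A_1)$ invariant with $\frac{d}{ds} S^1_s u = -A_1 S^1_s u$, that (ii) carries $S^1_s u$ into $D(A_2)$ under $\Phi$, and that $A_2$ commutes with $S^2_{t-s}$ on its domain, I would differentiate the product to obtain $g'(s) = S^2_{t-s}(A_2 \Phi - \Phi A_1) S^1_s u$, which vanishes by (ii). Hence $g$ is constant, so $S^2_t \Phi u = g(0) = g(t) = \Phi S^1_t u$; since $D(A_1)$ is dense in $H_1$ and all operators in sight are bounded, the identity extends to all of $H_1$, giving (i). The care needed is precisely in justifying the differentiability of $g$ and the commutation of $A_2$ with $S^2_{t-s}$, both of which are standard for the holomorphic semigroups at hand.

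For (iii)$\Rightarrow$(ii), I would take $u \in D(A_1) \subset V_1$, so that $\Phi u \in V_2$ by the first inclusion in (iii). For any $v \in V_2$ we have $\Phi^* v \in V_1$, and combining the intertwining identity of (iii) with the defining property of $D(A_1)$ yields $a_2(\Phi u, v) = a_1(u, \Phi^* v) = (A_1 u, \Phi^* v)_{H_1} = (\Phi A_1 u, v)_{H_2}$. As this holds for every $v \in V_2$, the definition of the operator associated with $a_2$ gives $\Phi u \in D(A_2)$ and $A_2 \Phi u = \Phi A_1 u$, which is (ii). This last implication is purely formal; the reason only one direction is claimed is that (iii) additionally constrains $\Phi^*$ on $V_2$ and is therefore strictly stronger than (ii).
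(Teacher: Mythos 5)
Your proposal is correct. The implications \ref{pdrumce202-1}$\Rightarrow$\ref{pdrumce202-2} and \ref{pdrumce202-3}$\Rightarrow$\ref{pdrumce202-2} are argued exactly as in the paper: the first via the difference quotient $t^{-1}(I-S^1_t)u$ pushed through the bounded operator $\Phi$, the second by testing $a_2(\Phi u,v)=a_1(u,\Phi^* v)=(A_1u,\Phi^* v)_{H_1}=(\Phi A_1u,v)_{H_2}$ against all $v\in V_2$ and invoking the definition of the associated operator. Where you genuinely diverge is in \ref{pdrumce202-2}$\Rightarrow$\ref{pdrumce202-1}: you run the classical connecting-path argument, differentiating $g(s)=S^2_{t-s}\,\Phi\,S^1_s u$ and showing $g'\equiv 0$, then extending from the dense set $D(A_1)$ by boundedness. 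The paper instead passes to resolvents: from \ref{pdrumce202-2} it gets $\Phi\,(\lambda I+A_1)^{-1}=(\lambda I+A_2)^{-1}\,\Phi$ (after shifting by $\omega$ so that $\lambda I+A_1$ is surjective), iterates to $\Phi\,(\lambda I+A_1)^{-n}=(\lambda I+A_2)^{-n}\,\Phi$, and concludes via the exponential formula of Yosida. The two routes are equally valid here; the resolvent route trades your product-rule bookkeeping (invariance of $D(A_1)$ under $S^1_s$, continuity of $s\mapsto A_2\Phi S^1_s u$, commutation of $A_2$ with $S^2_{t-s}$ on its domain -- all of which do hold for the holomorphic semigroups in question, but which you should state explicitly) for a single citation, and it transfers more readily to settings where strong differentiability on $[0,t]$ is awkward. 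Your closing heuristic that \ref{pdrumce202-3} is ``strictly stronger'' than \ref{pdrumce202-2} should be softened: the paper notes that the converse implication does hold whenever the forms have the square root property (in particular in the self-adjoint case), and only fails in general by McIntosh's counterexample.
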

\begin{proof}
`\ref{pdrumce202-1}$\Rightarrow$\ref{pdrumce202-2}'.
Let $u \in D(A_1)$.
Then 
\[
\Phi \, A_1 u
= \lim_{t \downarrow 0} t^{-1}  \Phi \, (I - S^1_t) u
= \lim_{t \downarrow 0} t^{-1}  (I - S^2_t) \, \Phi u
= A_2 \, \Phi u
.  \]
So $\Phi u \in D(A_2)$ and $A_2 \, \Phi u = \Phi \, A_1 u$.

`\ref{pdrumce202-2}$\Rightarrow$\ref{pdrumce202-1}'.
Replacing $A_k$ by $\omega I + A_k$, we may assume that both 
$S^1$ and $S^2$ are exponentially decreasing.
Let $u \in D(A_1)$ and $\lambda > 0$.
Then $(\lambda I + A_2) \, \Phi u = \Phi \, (\lambda I + A_1) u$.
Since $\lambda I + A_1$ is surjective, it follows that 
$\Phi \, (\lambda I + A_1)^{-1} v = (\lambda I + A_2)^{-1} \, \Phi v$
for all $v \in H_1$.
Hence by iteration,
$\Phi \, (\lambda I + A_1)^{-n} = (\lambda I + A_2)^{-n} \, \Phi$
for all $n \in \Ni$.
Then by (7) in \cite{Yos} Section~IX.7 one deduces \ref{pdrumce202-1}.

`\ref{pdrumce202-3}$\Rightarrow$\ref{pdrumce202-2}'.
Let $u \in D(A_1)$.
Then for all $v \in V_2$ one has $\Phi v \in V_1$ and 
\[
a_2(\Phi u, v) 
= a_1(u, \Phi^* v)
= (A_1 u, \Phi^* v)_{H_1}
= (\Phi \, A_1 u, v)_{H_2}
.  \]
Hence $\Phi u \in D(A_2)$ and $A_2 \, \Phi u = \Phi \, A_1 u$.
\end{proof}

We remark that under additional assumptions on the operators $A_1$ and $A_2$ it can be 
proved that all three statements in Proposition~\ref{pdrumce202} are equivalent.
It suffices that $a_1$ and $a_2$ have the {\bf square root property} on $H$,  which 
means that for the square root operator $(\omega_k I + A_k)^{1/2}$ of $\omega_k I + A_k$,
defined as in \cite{ABHN}, Section~3.8, where $\omega_k$ is the constant in 
(\ref{eSdrumce2;1}), we have $D((\omega_k I + A_k)^{1/2}) = V_k$, for all $k \in \{ 1,2 \} $. 
This is not always the case; a counterexample has been given by McIntosh \cite{McI1}, 
although it is always true if $A_1$ and $A_2$ are self-adjoint. 
As we do not need this equivalence in the sequel, we do not go into details.

\medskip

We  next assume that the intertwining operator $\Phi \in \cl (H_1,H_2)$ is 
invertible and thus an isomorphism between $H_1$ and $H_2$. 

\begin{cor} \label{cdrumce203}
Let $\Phi \in \cl(H_1,H_2)$ be invertible. 
Consider the following statements.
\begin{tabeleq}
\item \label{cdrumce203-1}
$\Phi(D(A_1)) \subset D(A_2)$ and $A_2 \, \Phi u = \Phi \, A_1 u$
for all $u \in D(A_1)$.
\item \label{cdrumce203-2}
$\Phi(D(A_1)) = D(A_2)$ and $A_2 \, \Phi u = \Phi \, A_1 u$ for all $u \in D(A_1)$.
\item \label{cdrumce203-3}
$\Phi^{-1} \, S^2_t \, \Phi = S^1_t$ for all $t > 0$.
\item \label{cdrumce203-4}
If $u\in D(A_1)$ and $\lambda \in \Ri$ are such that $A_1 u=\lambda u$, then 
$\Phi u \in D(A_2)$ and $A_2\Phi u = \lambda \Phi u$.
\end{tabeleq}
Then {\rm \ref{cdrumce203-1}} $\Leftrightarrow$ {\rm \ref{cdrumce203-2}} 
$\Leftrightarrow$ {\rm \ref{cdrumce203-3}} $\Rightarrow$ {\rm \ref{cdrumce203-4}}. 
If in addition $A_1$ is self-adjoint and has compact resolvent, then all four statements 
are equivalent.
\end{cor}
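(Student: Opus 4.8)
The plan is to obtain the three equivalences among \ref{cdrumce203-1}, \ref{cdrumce203-2} and \ref{cdrumce203-3} almost entirely from Proposition~\ref{pdrumce202}, exploiting the invertibility of $\Phi$, and then to treat \ref{cdrumce203-1}$\Rightarrow$\ref{cdrumce203-4} as a triviality and its converse under the self-adjointness hypothesis by means of a spectral decomposition. First I would dispose of the easy links. The implication \ref{cdrumce203-2}$\Rightarrow$\ref{cdrumce203-1} is immediate, the equality of domains trivially containing the inclusion. For \ref{cdrumce203-1}$\Leftrightarrow$\ref{cdrumce203-3}, observe that \ref{cdrumce203-1} is verbatim condition~\ref{pdrumce202-2} of Proposition~\ref{pdrumce202}, which is there shown to be equivalent to condition~\ref{pdrumce202-1}, namely $S^2_t \, \Phi = \Phi \, S^1_t$ for all $t>0$. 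Since $\Phi$ is invertible, left multiplication by $\Phi^{-1}$ turns this identity into $\Phi^{-1} \, S^2_t \, \Phi = S^1_t$, which is \ref{cdrumce203-3}, and the step is reversible.

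The only genuinely new point among the first three is the upgrade from the inclusion in \ref{cdrumce203-1} to the equality in \ref{cdrumce203-2}, that is, \ref{cdrumce203-3}$\Rightarrow$\ref{cdrumce203-2}. Here I would apply Proposition~\ref{pdrumce202} a second time, now to the bounded operator $\Phi^{-1} \in \cl(H_2,H_1)$ with the roles of the two indices interchanged; the continuity and ellipticity hypotheses are symmetric in $1$ and $2$, so this is legitimate. Rewriting \ref{cdrumce203-3} as $S^1_t \, \Phi^{-1} = \Phi^{-1} \, S^2_t$ shows that $\Phi^{-1}$ satisfies the index-swapped condition~\ref{pdrumce202-1}, whence $\Phi^{-1}(D(A_2)) \subset D(A_1)$. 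This yields $D(A_2) \subset \Phi(D(A_1))$, the reverse of the inclusion already supplied by \ref{cdrumce203-1}, so the two domains coincide.

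There remains the equivalence of \ref{cdrumce203-4}. The implication \ref{cdrumce203-1}$\Rightarrow$\ref{cdrumce203-4} is immediate, for if $A_1 u = \lambda u$ then $\Phi u \in D(A_2)$ and $A_2 \, \Phi u = \Phi \, A_1 u = \lambda \, \Phi u$. The substance lies in the converse under the assumption that $A_1$ is self-adjoint with compact resolvent, and this is the step I expect to carry the weight. By Proposition~\ref{pdrumce200} there is an orthonormal basis $(e_n)_{n\in\Ni}$ of $H_1$ consisting of eigenfunctions, $A_1 e_n = \lambda_n e_n$. Condition~\ref{cdrumce203-4} then gives $\Phi e_n \in D(A_2)$ with $A_2 \, \Phi e_n = \lambda_n \, \Phi e_n$, so that $S^2_t \, \Phi e_n = e^{-\lambda_n t} \, \Phi e_n$. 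Writing $u = \sum_n (u,e_n)_{H_1} \, e_n$ and passing the bounded operators $\Phi$ and $S^2_t$ through the series, both $S^2_t \, \Phi u$ and $\Phi \, S^1_t u$ reduce to the common sum $\sum_n e^{-\lambda_n t} (u,e_n)_{H_1} \, \Phi e_n$, giving $S^2_t \, \Phi u = \Phi \, S^1_t u$. This is condition~\ref{pdrumce202-1}, hence \ref{cdrumce203-1}, and the circle closes.

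The main obstacle is precisely the appeal to completeness of the eigenbasis in this last step: it is what forces the extra hypotheses, since for a general sectorial $A_1$ the eigenfunctions need not span $H_1$ and \ref{cdrumce203-4} is then strictly weaker than semigroup intertwining. The only routine care needed is the justification of interchanging $\Phi$ and $S^2_t$ with the infinite sums, which is immediate from their boundedness together with convergence of $u = \sum_n (u,e_n)_{H_1} \, e_n$ and of $S^1_t u$ in $H_1$.
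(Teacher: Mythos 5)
Your proof is correct, and it follows exactly the route the paper intends: the corollary is stated without proof as an immediate consequence of Proposition~\ref{pdrumce202} (applied to both $\Phi$ and $\Phi^{-1}$ to upgrade the domain inclusion to an equality), with the converse of \ref{cdrumce203-4} handled via the eigenbasis supplied by Proposition~\ref{pdrumce200}. Your identification of the completeness of the eigenbasis as the point where self-adjointness and compactness of the resolvent are genuinely needed is also the right diagnosis.
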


We say that $A_1$ and $A_2$ are {\bf similar}, or equivalently, that the semigroups
$S^1$ and $S^2$ are {\bf similar}, if the equivalent statements 
\ref{cdrumce203-1}--\ref{cdrumce203-3} hold.
In the case where $A_1$ and $A_2$ are self-adjoint and have compact resolvent, 
we may replace \ref{cdrumce203-2} with the statement `$\Phi(D(A_1)) = 
D(A_2)$ and the spectra of $A_1$ and $A_2$ coincide'. 
Thus we may regard 
similarity as a more general property than isospectrality.

The next result was stated in \cite{Are3} Lemma~1.3 for self-adjoint operators, 
but we note that it is also a direct consequence of Proposition~\ref{pdrumce202} and 
Corollary~\ref{cdrumce203} without requiring this assumption.

\begin{cor} \label{cdrumce204}
Let $\Phi \in \cl(H_1,H_2)$ be unitary.
Then the following are equivalent.
\begin{tabeleq}
\item \label{cdrumce204-1}
$S^2_t \, \Phi = \Phi \, S^1_t$ for all $t > 0$.
\item \label{cdrumce204-2}
$\Phi(V_1) = V_2$ and 
$a_2(\Phi u, \Phi v) = a_1(u, v)$ for all $u,v \in V_1$.
\end{tabeleq}
\end{cor}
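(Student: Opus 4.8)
The plan is to derive both implications from Proposition~\ref{pdrumce202}, using throughout that a unitary $\Phi$ satisfies $\Phi^* = \Phi^{-1}$ and $(\Phi x, \Phi y)_{H_2} = (x,y)_{H_1}$. It is precisely this last feature that lets one reach the form level without the square root property discussed in the remark following Proposition~\ref{pdrumce202}.

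For \ref{cdrumce204-2}$\Rightarrow$\ref{cdrumce204-1} I would simply check that \ref{cdrumce204-2} is a reformulation of Condition~\ref{pdrumce202-3}. Indeed $\Phi(V_1) = V_2$ gives both $\Phi(V_1) \subset V_2$ and $\Phi^*(V_2) = \Phi^{-1}(V_2) = V_1$, while for $u \in V_1$ and $v \in V_2$ one has $\Phi^* v \in V_1$ and, since $\Phi \Phi^* v = v$, the form identity in \ref{cdrumce204-2} gives $a_2(\Phi u, v) = a_2(\Phi u, \Phi \Phi^* v) = a_1(u, \Phi^* v)$. Thus Condition~\ref{pdrumce202-3} holds, and Proposition~\ref{pdrumce202} delivers \ref{cdrumce204-1}.

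The substance is in \ref{cdrumce204-1}$\Rightarrow$\ref{cdrumce204-2}. By Proposition~\ref{pdrumce202} (\ref{pdrumce202-1}$\Rightarrow$\ref{pdrumce202-2}), Condition~\ref{cdrumce204-1} yields $\Phi(D(A_1)) \subset D(A_2)$ and $A_2 \Phi u = \Phi A_1 u$ for $u \in D(A_1)$. The decisive step, and the only place unitarity is genuinely needed, is the energy identity: for $u \in D(A_1)$,
\[
a_2(\Phi u, \Phi u) = (A_2 \Phi u, \Phi u)_{H_2} = (\Phi A_1 u, \Phi u)_{H_2} = (A_1 u, u)_{H_1} = a_1(u,u),
\]
where the third equality uses $(\Phi x, \Phi y)_{H_2} = (x,y)_{H_1}$. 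Combining this identity with the ellipticity (\ref{eSdrumce2;1}) of $a_2$, the continuity of $a_1$, and $\|\Phi u\|_{H_2} = \|u\|_{H_1}$ gives $\|\Phi u\|_{V_2} \leq C \|u\|_{V_1}$ for all $u \in D(A_1)$. Since $D(A_1)$ is a core for $a_1$, that is, dense in $V_1$ for the form norm, $\Phi$ extends to a bounded operator from $V_1$ into $V_2$; as $V_k$ is continuously embedded in $H_k$, this extension agrees with $\Phi$ itself, so $\Phi(V_1) \subset V_2$. The identity above then propagates to all of $V_1$ by continuity, and by polarization to $a_2(\Phi u, \Phi v) = a_1(u,v)$ for all $u,v \in V_1$.

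Finally, to upgrade $\Phi(V_1) \subset V_2$ to the required equality, I would run the same argument for $\Phi^{-1}$. Condition~\ref{cdrumce204-1} is equivalent to $S^1_t \Phi^{-1} = \Phi^{-1} S^2_t$, so $\Phi^{-1}$, which is again unitary, intertwines $S^2$ and $S^1$; the preceding reasoning with the indices interchanged then gives $\Phi^{-1}(V_2) \subset V_1$, whence $V_2 \subset \Phi(V_1)$ and $\Phi(V_1) = V_2$. The one point demanding care is exactly the energy identity: it converts the operator-level relation on $D(A_1)$ into a bound on the form norm, and this is what lets the form-domain conclusion follow by density, sidestepping any identification of $V_k$ with a square-root domain.
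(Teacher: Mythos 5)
Your proof is correct. Note that the paper itself gives no proof of this corollary: it only remarks that the statement is a direct consequence of Proposition~\ref{pdrumce202} and Corollary~\ref{cdrumce203} (the self-adjoint case being \cite{Are3} Lemma~1.3), so your argument supplies the substance that the paper leaves out. The direction \ref{cdrumce204-2}$\Rightarrow$\ref{cdrumce204-1} is indeed just a translation into Condition~\ref{pdrumce202-3} of Proposition~\ref{pdrumce202}, exactly as the paper intends. The real content is your treatment of \ref{cdrumce204-1}$\Rightarrow$\ref{cdrumce204-2}: the energy identity $a_2(\Phi u,\Phi u)=(A_2\Phi u,\Phi u)_{H_2}=(A_1u,u)_{H_1}=a_1(u,u)$ on $D(A_1)$, combined with the ellipticity (\ref{eSdrumce2;1}) of $a_2$ and the continuity of $a_1$, yields $\|\Phi u\|_{V_2}\le C\,\|u\|_{V_1}$; density of $D(A_1)$ in $V_1$ then gives $\Phi(V_1)\subset V_2$, and polarization together with the same argument applied to $\Phi^{-1}=\Phi^*$ finishes the proof. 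This is precisely the mechanism that lets the equivalence hold for unitary $\Phi$ without the square root property that would be needed for the general equivalence of \ref{pdrumce202-1} and \ref{pdrumce202-3}. The one ingredient you invoke that the paper nowhere records is that $D(A_1)$ is a core for the form, i.e.\ dense in $V_1$ for the $V_1$-norm; this is standard for operators associated with continuous elliptic forms, but it deserves an explicit citation (e.g.\ Kato's representation theorems or Ouhabaz's monograph) since nothing in Section~\ref{Sdrumce2} states it.
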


We finish this section by pointing out that the existence of a unitary similarity 
transform is guaranteed by self-adjointness of the operators alone, and compactness 
of the resolvents is not needed.

\begin{prop} \label{pdrumce205}
Let $A_1$ and $A_2$ be two self-adjoint operators on $H_1$ and $H_2$, 
respectively.
Assume that the semigroups $S^1$ and $S^2$ are similar.
Then there exists a unitary operator $U \in \cl(H_1,H_2)$ such that
\[
U^{-1} \, S^1_t \, U = S^2_t
\]
for $t>0$.
\end{prop}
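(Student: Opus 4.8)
The plan is to upgrade the (generally non-unitary) similarity transform into a genuine unitary equivalence by means of the polar decomposition. By the definition of similarity together with Corollary~\ref{cdrumce203}, there is an invertible $\Phi \in \cl(H_1,H_2)$ satisfying $S^2_t \, \Phi = \Phi \, S^1_t$ for all $t>0$. Since $\Phi$ is invertible, so is $\Phi^*$, and therefore $P := (\Phi^* \Phi)^{1/2}$ is a positive, self-adjoint, \emph{invertible} operator on $H_1$; the polar decomposition of an invertible operator then yields $\Phi = U P$ with $U \in \cl(H_1,H_2)$ unitary. The whole point is to show that this $U$, extracted purely from $\Phi$, already intertwines the two semigroups.

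The first substantive step is to show that $P$ commutes with each $S^1_t$. Here the self-adjointness hypothesis enters decisively: because $A_1$ and $A_2$ are self-adjoint, the operators $S^1_t$ and $S^2_t$ are self-adjoint, so taking adjoints in the intertwining identity $S^2_t \, \Phi = \Phi \, S^1_t$ produces the companion identity $\Phi^* \, S^2_t = S^1_t \, \Phi^*$. Combining the two gives
\[
\Phi^* \Phi \, S^1_t = \Phi^* \, S^2_t \, \Phi = S^1_t \, \Phi^* \Phi ,
\]
so that $P^2 = \Phi^* \Phi$ commutes with every $S^1_t$.

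The crux, and the step I expect to be the main obstacle, is passing from commutation with $P^2$ to commutation with $P$ itself. Since $P = f(P^2)$ for the continuous function $f(s) = \sqrt{s}$ on the (compact) spectrum of $P^2$, the functional calculus provided by the spectral theorem shows that any bounded operator commuting with $P^2$ must also commute with $P$: concretely, $f$ is a uniform limit of polynomials on $\operatorname{spec}(P^2)$, each polynomial in $P^2$ commutes with $S^1_t$, and one passes to the limit. This is the only place where genuine care is needed; everything else is formal.

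Granting this, the conclusion follows at once: using $P S^1_t = S^1_t P$ we compute
\[
S^2_t \, U P = S^2_t \, \Phi = \Phi \, S^1_t = U P \, S^1_t = U \, S^1_t \, P ,
\]
and cancelling the invertible factor $P$ on the right gives $S^2_t \, U = U \, S^1_t$ for all $t>0$. Thus the unitary $U$ implements a unitary equivalence of the two semigroups (equivalently $U^{-1} S^2_t \, U = S^1_t$, exactly as in~\eqref{pdrumce201b}), which is the assertion. Notably, no compactness of the resolvents is used; self-adjointness of $A_1$ and $A_2$ alone drives the argument, through the self-adjointness of the $S^i_t$ and the spectral calculus for $P$.
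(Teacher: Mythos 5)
Your proof is correct and follows essentially the same route as the paper: polar decomposition $\Phi = U\,|\Phi|$, taking adjoints of the intertwining relation (using self-adjointness of the semigroups) to show $|\Phi|$ commutes with $S^1_t$, then cancelling the invertible positive factor. The only difference is that you spell out the passage from commutation with $|\Phi|^2=\Phi^*\Phi$ to commutation with $|\Phi|$ via the continuous functional calculus, a step the paper leaves implicit.
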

\begin{proof}
We consider the polar decomposition $\Phi = U \, |\Phi|$, where $U \in \cl(H_1,H_2)$ 
is unitary and $|\Phi| = (\Phi^* \Phi)^{1/2} \in \cl(H_1)$ is invertible and self-adjoint.
Since
\[
\Phi^* S^2_t = (S^2_t \, \Phi)^* = (\Phi \, S^1_t)^* = S^1 \, \Phi^*
\]
for all $t>0$, we see that $\Phi^*$ is also an intertwining operator.
Thus $|\Phi|$ commutes with $S^1_t$ for all $t>0$, and so
\[
U \, S^1_t 
= U \, |\Phi| \, |\Phi|^{-1} \, S^1_t 
= \Phi \, S^1_t \, |\Phi|^{-1} 
= S^2_t \, \Phi \, |\Phi|^{-1} 
= S^2_t \, U 
\]
for all $t > 0$.
\end{proof}

\section{Isospectral domains for the Neumann Laplacian} \label{Sdrumce3}

For an open polygon $\Omega$ in $\Ri^2$ we denote by $\Delta_\Omega^N$ the 
Neumann Laplacian on $L_2(\Omega)$. 
This realization of the Laplacian with Neumann boundary conditions is self-adjoint, 
has compact resolvent and its negative is bounded from below, with a sequence of 
eigenvalues $0 = \lambda_0 \leq \lambda_1 \leq \ldots \to \infty$.
We will consider the two (very) warped propeller-like domains from 
Figure~\ref{fdrumce1} and show that $\Delta_{\Omega_1}^N$ and 
$\Delta_{\Omega_2}^N$ are similar.
This will be done with the help of our form criterion established in 
Proposition~\ref{pdrumce202}.
As a corollary we deduce that $\Delta_{\Omega_1}^N$ and $\Delta_{\Omega_2}^N$ 
are isospectral even though $\Omega_1$ and $\Omega_2$ are obviously not congruent.
Note that $\Omega_1$ and $\Omega_2$ look like propellers if the 
constituent triangles are equilateral.

Since we wish to decompose our polygons into their constituent triangles, we need to start 
with some basic facts about traces and integration by parts.
We let $\Omega \subset \Ri^2$ be an arbitrary open polygon, although in practice we only 
need the following results for our warped propellers.
On the boundary $\Gamma$ of $\Omega$ we let $\sigma$ denote the usual surface measure; 
on each straight line segment, $\sigma$ is simply one-dimensional Lebesgue measure. 
The Trace Theorem states that there exists a unique bounded operator $\Tr \colon  H^1(\Omega) 
\to L_2(\Gamma)$ such that $\Tr(u)=u|_\Gamma$ for all $u \in H^1(\Omega) \cap 
C(\overline \Omega)$.
Observe that, since $\Omega$ is Lipschitz, the space $H^1(\Omega) \cap C(\overline \Omega)$ is 
dense in $H^1(\Omega)$.
By $\nu(z)=(\nu_1(z), \nu_2(z))$ we denote the outer unit normal to $\Omega$ at $z\in 
\Gamma$.
Then $\nu(z)$ is constant on each straight line segment of the boundary. 
The integration by parts formula states that
\[
-\int_\Omega (\partial_j u) \,v = \int_\Omega u\,\partial_j v - 
\int_\Gamma \nu_j \, u \,v
\]
for all $u,v \in H^1(\Omega)$ and $j \in \{ 1,2 \} $. 
Here the integral over $\Gamma$ is with respect to $\sigma$, and we have omitted the 
trace to simplify notation.

The {\bf Neumann Laplacian} is by definition the operator $\Delta_\Omega^N$ on $L_2(\Omega)$ 
such that $- \Delta_\Omega^N$ is associated with the form $a \colon H^1(\Omega) \times H^1(\Omega) \to \Ci$ given by
\begin{equation}
a(u,v) = \int_\Omega \nabla u \cdot \overline{\nabla v}.
\label{eSdrumse2;8}
\end{equation}
We denote by $S$ the semigroup generated by $\Delta_\Omega^N$.

If $u \in H^1(\Omega)$ is such that the distributional Laplacian
$\Delta u \in L_2(\Omega)$, then for all 
$h \in L_2(\Gamma)$ we say that $\partial_\nu u = h$ if
\begin{equation}
\int_\Omega (\Delta u) \,v + \int_\Omega \nabla u\cdot \nabla v= \int_\Gamma h \,v
\label{eSdrumce3;0}
\end{equation}
for all $v\in H^1(\Omega)$. That is, we define the normal derivative via Green's 
formula.
Based on this definition, the operator $\Delta_\Omega^N$ has the domain
\[
D(\Delta_\Omega^N)=\{u\in H^1(\Omega): \Delta u\in L_2(\Omega) \mbox{ and } \partial_\nu u=0\}.
\]
This is valid whenever $\Omega$ is a Lipschitz domain.

Now let $T$ be a fixed scalene triangle whose three different sides are labelled $\Gamma_1$, 
$\Gamma_2$ and $\Gamma_3$ as in Figure~\ref{fdrumce2}.
\begin{figure}[ht!]          
\centering
\vspace*{1mm} 
\epsfig{file=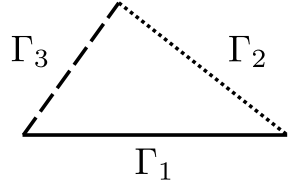, height=24mm} 
\caption{The triangle $T$.\label{fdrumce2}}
\end{figure}
Thus if $\Omega_1$ and $\Omega_2$ are broken into their seven constituent triangles, 
then each is congruent to $T$.

Now let $\Omega$ be either one of $\Omega_1$ and $\Omega_2$ and consider the seven
open disjoint triangles $T_1,\ldots, T_7$ such that
\[
\overline \Omega = \bigcup_{k=1}^7 \overline T_k.
\]
Two triangles $\overline T_k$ and $\overline T_l$ may have a common side; there are six such 
sides inside $\Omega$.

If $u\in H^1(\Omega)$, then $u_k:= u|_{T_k} \in H^1(T_k)$ for all $k \in \{ 1,\ldots,7 \} $.
Conversely, the following basic result holds.

\begin{lemma} \label{ldrumce3b03}
Let $u \in L_2(\Omega)$ be such that $u_k:=u|_{T_k} \in H^1(T_k)$ for all $k \in \{ 1,\ldots,7 \} $.
Then $u \in H^1(\Omega)$ if and only if $u_k$ and $u_l$ have the same trace on common 
sides of $T_k$ and $T_l$ for all $k,l \in \{ 1,\ldots,7 \} $ with $k \neq l$.
Moreover, if $u \in H^1(\Omega)$ then
$(\partial_j u)|_{T_k} = \partial_j u_k$ on $T_k$ for all
$k \in \{ 1,\ldots,7 \} $ and $j \in \{ 1,2 \} $.
\end{lemma}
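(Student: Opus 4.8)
The plan is to prove the characterization via the distributional definition of the weak derivative, using the integration-by-parts formula stated just above the lemma. The statement involves a gluing condition: $u$ patched from pieces $u_k\in H^1(T_k)$ lies in $H^1(\Omega)$ exactly when the traces match across the six interior common sides. The key technical input is the trace operator $\Tr\colon H^1(T_k)\to L_2(\partial T_k)$ and the integration-by-parts identity, both recalled in the excerpt.

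First I would address the ``only if'' direction, which is essentially the easy restriction statement together with the moreover clause. If $u\in H^1(\Omega)$, then $u_k=u|_{T_k}\in H^1(T_k)$ and $(\partial_j u)|_{T_k}=\partial_j u_k$ follows because testing the global weak derivative against $\varphi\in C_c^\infty(T_k)$ (extended by zero) reproduces the local weak-derivative identity on $T_k$; this gives the final assertion. For the trace-matching, since the global trace operator on $H^1(\Omega)$ restricts consistently, the traces of $u_k$ and $u_l$ from the two sides of a shared edge both equal the trace of $u$ on that edge, hence agree in $L_2$.

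The substance is the ``if'' direction. Assume each $u_k\in H^1(T_k)$ and that traces match on all common interior sides. I would show directly that the candidate distributional derivative $g_j$, defined piecewise by $g_j|_{T_k}=\partial_j u_k$, is the weak derivative of $u$ on $\Omega$. Fix $v\in C_c^\infty(\Omega)$ and compute $\int_\Omega u\,\partial_j v=\sum_{k=1}^7\int_{T_k}u_k\,\partial_j v$. Applying the integration-by-parts formula on each $T_k$ produces $\sum_k\bigl(-\int_{T_k}(\partial_j u_k)v+\int_{\partial T_k}\nu_j^{(k)}\,u_k\,v\bigr)$. The boundary terms on the outer boundary $\Gamma$ vanish because $v$ has compact support in $\Omega$. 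On each interior common side, the edge appears twice, once from each adjacent triangle, with outer normals $\nu_j^{(k)}$ and $\nu_j^{(l)}$ that are equal and opposite; since $u_k$ and $u_l$ have the same trace there, the two boundary contributions cancel. What remains is $-\int_\Omega g_j\,v$, which identifies $g_j$ as $\partial_j u$ in the distributional sense, and $g_j\in L_2(\Omega)$, so $u\in H^1(\Omega)$.

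The main obstacle is making the boundary-term bookkeeping rigorous: one must justify that the trace terms over $\partial T_k$ are genuine $L_2$ traces (valid since each $T_k$ is Lipschitz and hence the trace theorem applies), confirm the outer-normal sign cancellation edge-by-edge, and handle the triangle vertices, where several edges meet, as sets of measure zero in the surface measure $\sigma$ so they contribute nothing. I would also note that extending test functions across edges is legitimate precisely because the matched traces make $u$ admit no singular contribution concentrated on the interfaces; this ``no jump'' fact is exactly what the trace condition encodes.
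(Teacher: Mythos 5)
Your proposal is correct and follows essentially the same route as the paper: the ``if'' direction is the identical computation (split the test integral over the seven triangles, integrate by parts on each, and cancel the interior boundary terms using the opposite outer normals and the matching traces), and the necessity of the trace condition is obtained by an equivalent observation (the paper invokes density of $H^1(\Omega)\cap C(\overline\Omega)$ where you invoke consistency of the trace operator). No gaps to report.
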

\begin{proof}
Since $H^1(\Omega) \cap C(\overline\Omega)$ is dense in $H^1(\Omega)$ the condition 
on the traces is clearly necessary.
Assume now that $u$ satisfies this trace condition. 
Let $\varphi \in C_c^1(\Omega)$ and $j \in \{1,2\}$. Then 
\begin{eqnarray}
-\int_\Omega u \,\partial_j \varphi
& = & -\sum_{k=1}^7 \int_{T_k} u_k \, \partial_j\varphi \nonumber \\
& = & \sum_{k=1}^7 \Big( \int_{T_k} (\partial_j u_k) \, \varphi 
        - \int_{\partial T_k} \nu_{k,j} \, u_k \, \varphi \Big) \nonumber \\
& = & \sum_{k=1}^7 \int_{T_k} (\partial_j u_k) \, \varphi \label{eldrumce3b03;1} \\
& = & \int_\Omega w \, \varphi, \nonumber
\end{eqnarray}
where $w \in L_2(\Omega)$ is such that $w|_{T_k} = \partial_j u_k$.
Thus $\partial_j u = w$ in $\Omega$ by definition of the weak derivative of a function.
Here we denote by $\nu_k$ the outer unit normal to $T_k$ on $\partial T_k$ with its two 
components $\nu_{k,1}$ and $\nu_{k,2}$.
Since $\nu_k = - \nu_l$ on $\overline T_k \cap \overline T_l$ whenever $k\neq l$, we have
\[
\sum_{k=1}^7 \int_{\partial T_k} \nu_{k,j} \, u \, \varphi = 0
,  \]
which we used in (\ref{eldrumce3b03;1}).
\end{proof}

If $\tau \colon  \Ri^2 \to \Ri^2$ is an isometry,
then the map $U \colon  L_2(\tau(\Omega)) \to L_2(\Omega)$ given by $u \mapsto u 
\circ \tau$ is a unitary operator, $U(H^1(\tau(\Omega))) = H^1(\Omega)$ and 
$U|_{H^1(\tau(\Omega))}$ is also unitary and isometric.

Now consider the first warped propeller $\Omega_1$ from Figure~\ref{fdrumce1}. 
Denote by $T_1,\ldots, T_7$ the seven disjoint triangles isomorphic to $T$ such that 
$\overline \Omega_1 = \bigcup_{k=1}^7 \overline T_k$, and 
for all $k \in \{ 1,\ldots,7 \} $ denote by $\tau_k$ the isometry mapping 
$T$ onto $T_k$.
If we define a map 
\[
\Phi_1(w) = (w|_{T_1} \circ \tau_1,\ldots, w|_{T_7} \circ \tau_7),
\]
then $\Phi_1 \colon  L_2(\Omega_1) \to L_2(T)^7$ is unitary and $\Phi_1(H^1(\Omega_1)) = V_1$, 
where
\begin{eqnarray*}
V_1 = \{ (u_1,\ldots,u_7) \in H^1(T)^7 
& : & u_1 = u_2 \mbox{ and } u_4 = u_7 \mbox{ on } \Gamma_1  \\
& & u_1 = u_3 \mbox{ and } u_2 = u_5 \mbox{ on } \Gamma_2  \\
& & u_1 = u_4 \mbox{ and } u_3 = u_6 \mbox{ on } \Gamma_3 \}
.
\end{eqnarray*}
Here we mean more precisely that $u_1$ and $u_2$ have the same trace on $\Gamma_1$, 
and so on. 
Since the trace is a continuous mapping from $H^1(T)$ into $L_2(\partial T)$, the space $V_1$ 
is closed in $H^1(T)^7$.

We now define a form $\tilde a_1 \colon  V_1 \times V_1 \to \Ci$ by
\begin{equation} \label{epdrumce3b01}
\tilde a_1 (u,v) := \sum_{k=1}^7 \int_T \nabla u_k \cdot \overline{\nabla v_k},
\end{equation}
where we have written $u = (u_1,\ldots,u_7)$ and $v = (v_1,\ldots,v_7)$.
Then $\tilde a_1$ is continuous, symmetric and elliptic with respect to $L_2(T)^7$.
We denote by $\widetilde A_1$ the self-adjoint operator on $L_2(T)^7$ associated with 
$\tilde a_1$ and by $\widetilde S^1$ the semigroup generated by $-\widetilde A_1$.
We next show that the operators $-\widetilde A_1$ and $\Delta_{\Omega_1}^N$ 
(and the semigroups they generate) are similar. 
Let $S^1$ be the semigroup generated by the Neumann Laplacian on $\Omega_1$. 
We denote by $a_1$ the form associated with the semigroup $S^1$ 
on $\Omega_1$, cf.\ (\ref{eSdrumse2;8}).

\begin{prop} \label{pdrumce3b03}
If $t>0$ then
\[
\Phi^{-1}_1 \, \widetilde S^1_t \, \Phi_1 = S^1_t
.  \]
\end{prop}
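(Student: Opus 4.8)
The plan is to reduce everything to the abstract intertwining criteria of Section~\ref{Sdrumce2}, so that the only concrete input is a change-of-variables computation on the seven triangles. Since $\Phi_1$ is invertible, the asserted identity $\Phi_1^{-1}\,\widetilde S^1_t\,\Phi_1 = S^1_t$ is equivalent to $\widetilde S^1_t\,\Phi_1 = \Phi_1\,S^1_t$ for all $t>0$; and because $\Phi_1$ is in addition unitary, I would read this off from Corollary~\ref{cdrumce204}, applied with the Neumann form $a_1$ on the form domain $H^1(\Omega_1)$ in the role of the first form and with $\tilde a_1$ on $V_1$ in the role of the second. (Alternatively one may verify Condition~\ref{pdrumce202-3} of Proposition~\ref{pdrumce202} and then pass to the conjugated form of the intertwining via Corollary~\ref{cdrumce203}.) It thus remains only to check the two requirements in~\ref{cdrumce204-2}.

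The first of these, that $\Phi_1$ carries the first form domain onto the second, is exactly the property $\Phi_1(H^1(\Omega_1)) = V_1$ already recorded when $\Phi_1$ was constructed; the matching of traces across the six interior sides that defines $V_1$ is precisely the $H^1$-gluing condition of Lemma~\ref{ldrumce3b03}. I would simply quote this.

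The substance lies in the form identity $\tilde a_1(\Phi_1 u, \Phi_1 v) = a_1(u,v)$ for all $u,v \in H^1(\Omega_1)$. Setting $u_k = u|_{T_k}$ and $v_k = v|_{T_k}$, Lemma~\ref{ldrumce3b03} guarantees that the weak gradient of $u$ restricts on each $T_k$ to the weak gradient of $u_k$, so that $a_1(u,v) = \int_{\Omega_1}\nabla u\cdot\overline{\nabla v} = \sum_{k=1}^7\int_{T_k}\nabla u_k\cdot\overline{\nabla v_k}$. On the other side, the $k$-th component of $\Phi_1 u$ is $u_k\circ\tau_k$; by the chain rule its gradient is the pullback of $(\nabla u_k)\circ\tau_k$ under the orthogonal linear part $R_k$ of the isometry $\tau_k$. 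Orthogonality, $R_k^{\mathsf T}R_k = I$, means that this transformation preserves the Hermitian dot product, and $\tau_k$ has Jacobian of modulus one, so the change of variables $\tau_k\colon T\to T_k$ gives $\int_T\nabla(u_k\circ\tau_k)\cdot\overline{\nabla(v_k\circ\tau_k)} = \int_{T_k}\nabla u_k\cdot\overline{\nabla v_k}$. Summing over $k$ turns $\tilde a_1(\Phi_1 u,\Phi_1 v)$ into $\sum_{k=1}^7\int_{T_k}\nabla u_k\cdot\overline{\nabla v_k}$, which equals $a_1(u,v)$ by the previous identity.

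I expect the only point requiring genuine care to be the invariance of the Dirichlet integral under the $\tau_k$ at the level of weak derivatives, that is, the identity $\nabla(w\circ\tau_k) = R_k^{\mathsf T}\,(\nabla w)\circ\tau_k$ in $L_2(T)$ together with the cancellation produced by orthogonality; the rest is bookkeeping. Once the two conditions of~\ref{cdrumce204-2} are in hand, Corollary~\ref{cdrumce204} yields $\widetilde S^1_t\,\Phi_1 = \Phi_1\,S^1_t$, and conjugating by the invertible $\Phi_1$ gives the stated identity.
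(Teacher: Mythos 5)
Your proposal is correct and takes essentially the same route as the paper: verify the two conditions of Corollary~\ref{cdrumce204}\ref{cdrumce204-2}, namely $\Phi_1(H^1(\Omega_1)) = V_1$ and the form identity $\tilde a_1(\Phi_1 u,\Phi_1 v)=a_1(u,v)$ via change of variables under the isometries $\tau_k$, and conclude. Your explicit handling of the orthogonal linear part $R_k$ in the chain rule is slightly more careful than the paper's shorthand $\nabla(u\circ\tau_k)=(\nabla u)\circ\tau_k$, but the argument is the same.
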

\begin{proof}
Let $u,v \in H^1(\Omega_1)$ and write $\Phi_1(u) = (u_1,\ldots, u_7)$ and
$\Phi_1(v) = (v_1,\ldots, v_7)$. 
Then
\begin{eqnarray}
\tilde a_1 (\Phi_1 u, \Phi_1 v)
& = & \sum_{k=1}^7 \int_T \nabla u_k \cdot \overline{\nabla v_k} \nonumber \\
& = & \sum_{k=1}^7 \int_T \nabla (u\circ \tau_k) \cdot \overline{\nabla(v\circ \tau_k)} \nonumber \\
& = & \sum_{k=1}^7 \int_T (\nabla u) \circ \tau_k \cdot \overline{(\nabla v) \circ \tau_k} \nonumber \\
& = & \sum_{k=1}^7 \int_{T_k} \nabla u \cdot \overline{\nabla v} \nonumber \\
& = & \int_{\Omega_1} \nabla u \cdot \overline{\nabla v}. \label{edrumce3b;1}
\end{eqnarray}
Now the claim follows from Corollary~\ref{cdrumce204}.
\end{proof}

An obvious analogue holds for $\Omega_2$. 
Namely, define a unitary map $\Phi_2 \colon  L_2(\Omega_2) \to L_2(T)^7$ in the obvious 
way, and let 
\begin{eqnarray*}
V_2 = \{ (u_1,\ldots,u_7) \in H^1(T)^7 
& : & u_1 = u_2 \mbox{ and } u_3 = u_6 \mbox{ on } \Gamma_1  \\
& & u_1 = u_3 \mbox{ and } u_4 = u_7 \mbox{ on } \Gamma_2  \\
& & u_1 = u_4 \mbox{ and } u_2 = u_5 \mbox{ on } \Gamma_3 \},
\end{eqnarray*}
so that $\Phi_2(H^1(\Omega_2)) = V_2$. 
We define $\tilde a_2 \colon  V_2 \times V_2 \to \Ci$ by 
\[
\tilde a_2(\Phi_2 u,\Phi_2 v):= \sum_{k=1}^7 \int_T \nabla u_k \cdot \overline{\nabla v_k},
\]
where $\Phi_2 u = (u_1,\ldots,u_7)$ for all $u \in H^1(\Omega_2)$, etc., and denote by 
$\widetilde A_2$ the self-adjoint operator on $L_2(T)^7$ associated with $\tilde a_2$.
Let $\widetilde S^2$ be the semigroup generated by $- \widetilde A_2$ 
and by $S^2$ the semigroup generated by $\Delta_{\Omega_2}^N$.

\begin{prop} \label{pdrumce3b04}
The operators $\widetilde A_2$ on $L_2(T)^7$ and $-\Delta_{\Omega_2}^N$ on 
$L_2(\Omega_2)$ are similar.
Precisely,
\[
\Phi_2^{-1} \, \widetilde S^2_t \, \Phi_2 = S^2_t
\]
for all $t > 0$.
\end{prop}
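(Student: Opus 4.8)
The plan is to prove this by following the proof of Proposition~\ref{pdrumce3b03} essentially verbatim, since the only genuine difference between $\Omega_1$ and $\Omega_2$ lies in the gluing pattern encoded in the form domains $V_1$ and $V_2$, and this pattern plays no role whatsoever in the computation of the form values. Concretely, I would invoke Corollary~\ref{cdrumce204} with the unitary operator $\Phi_2 \in \cl(L_2(\Omega_2), L_2(T)^7)$, taking the form $a_2$ on $H^1(\Omega_2)$ in the role of $a_1$ and $\tilde a_2$ on $V_2$ in the role of $a_2$. Since $\Phi_2(H^1(\Omega_2)) = V_2$ by construction, condition~\ref{cdrumce204-2} of that corollary reduces to the single form identity $\tilde a_2(\Phi_2 u, \Phi_2 v) = a_2(u,v)$ for all $u,v \in H^1(\Omega_2)$. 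Verifying this identity then yields, via condition~\ref{cdrumce204-1}, the relation $\widetilde S^2_t \, \Phi_2 = \Phi_2 \, S^2_t$ for all $t>0$, which upon left-multiplication by $\Phi_2^{-1}$ is exactly the displayed intertwining; the similarity of $\widetilde A_2$ and $-\Delta_{\Omega_2}^N$ in the sense of Corollary~\ref{cdrumce203} is then immediate, as this is precisely statement~\ref{cdrumce203-3} there.

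To establish the form identity I would take $u,v \in H^1(\Omega_2)$, write $\Phi_2 u = (u_1,\ldots,u_7)$ and $\Phi_2 v = (v_1,\ldots,v_7)$ with $u_k = u|_{T_k} \circ \tau_k$, and reproduce the chain of equalities culminating in~(\ref{edrumce3b;1}). Each summand $\int_T \nabla u_k \cdot \overline{\nabla v_k}$ is rewritten using $u_k = u \circ \tau_k$ and the fact that $\tau_k$ is a Euclidean isometry: the gradient transforms by the orthogonal linear part of $\tau_k$, whose contribution cancels in the Euclidean inner product $\nabla u_k \cdot \overline{\nabla v_k}$, while the change of variables has unit Jacobian. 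Thus $\int_T \nabla u_k \cdot \overline{\nabla v_k} = \int_{T_k} \nabla u \cdot \overline{\nabla v}$, and summing over $k$ together with $\overline{\Omega_2} = \bigcup_{k=1}^7 \overline{T_k}$ and the pairwise disjointness of the $T_k$ gives $\sum_{k=1}^7 \int_{T_k} \nabla u \cdot \overline{\nabla v} = \int_{\Omega_2} \nabla u \cdot \overline{\nabla v} = a_2(u,v)$. This computation is formally identical to the one for $\Omega_1$ and simply does not see which triangles are glued along which sides.

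I do not expect any serious obstacle here; the single point that genuinely requires the geometry of $\Omega_2$ rather than $\Omega_1$ is the already-asserted identity $\Phi_2(H^1(\Omega_2)) = V_2$, which rests on Lemma~\ref{ldrumce3b03}. The verification I would supply is that the six interior edges of $\Omega_2$ impose six trace-matching conditions, and that pulling these back through the isometries $\tau_1,\ldots,\tau_7$ reproduces exactly the six trace identities listed in the definition of $V_2$, now with the pairing of triangles and sides dictated by the second propeller. Granting this, the change-of-variables computation above and the appeal to Corollary~\ref{cdrumce204} close the argument precisely as in the Neumann case for $\Omega_1$.
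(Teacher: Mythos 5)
Your proposal is correct and follows exactly the route the paper intends: the paper states Proposition~\ref{pdrumce3b04} without proof as the ``obvious analogue'' of Proposition~\ref{pdrumce3b03}, whose proof is precisely the application of Corollary~\ref{cdrumce204} to the unitary $\Phi_2$ together with the change-of-variables computation~(\ref{edrumce3b;1}) and the identification $\Phi_2(H^1(\Omega_2))=V_2$ via Lemma~\ref{ldrumce3b03}. You also correctly isolate the only step where the geometry of $\Omega_2$ (as opposed to $\Omega_1$) actually enters, namely the matching of the six interior-edge trace conditions with the definition of $V_2$.
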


The similarities established so far are quite simple; analogous results hold for any polygon 
decomposed into triangles. 
But the attraction of this approach is that, to show that $\Delta_{\Omega_1}^N$ and 
$\Delta_{\Omega_2}^N$ are similar, it suffices to prove the statement for $\widetilde A_1$ 
and $\widetilde A_2$, which are both defined as operators on $L_2(T)^7$. 
This is exactly what we shall now do, and it is here that the special combinatorial relations 
defining $V_1$ and $V_2$ are crucial. 

We define a map $B \colon \Ri^7 \to \Ri^7$ by 
\[
B = \left( \begin{array}{ccccccc}
 0 & 1 & 1 & 1 & 0 & 0 & 0  \\
 1 & 0 & 1 & 0 & 0 & 0 & 1  \\
 1 & 0 & 0 & 1 & 1 & 0 & 0  \\
 1 & 1 & 0 & 0 & 0 & 1 & 0  \\
 0 & 0 & 0 & 1 & 0 & 1 & 1  \\
 0 & 1 & 0 & 0 & 1 & 0 & 1  \\
 0 & 0 & 1 & 0 & 1 & 1 & 0  
           \end{array} \right)_{\textstyle .}
\]
This is an analogue for our domains of the matrix $T^N$ considered in \cite{Ber2}
(with $a=0$ and $b=1$; see the discussion in Section~\ref{Sdrumce4}). 
Moreover, define $\Phi \colon L_2(T)^7 \to L_2(T)^7$ by 
\[
(\Phi u)_k = \sum_{l=1}^7 b_{kl} \, u_l
.  \]
The adjoint $\Phi^*$ may also be defined directly with respect to $B^*$ by
\[
(\Phi^* u)_l = \sum_{k=1}^7 b_{kl} \, u_k 
.  \]
It is a simple (but central) calculation to show that 
\[
\Phi(V_1) \subset V_2 
\quad \mbox{and} \quad \Phi^*(V_2) \subset V_1
.  \]
Moreover, for all $u \in V_1$ and $v \in V_2$ one has
\begin{eqnarray}
\tilde a_2(\Phi u, v)
=  \sum_{k=1}^7 \int_T \nabla(\Phi u)_k \cdot \overline{\nabla v_k}
& = & \sum_{k=1}^7 \sum_{l=1}^7 b_{kl} \, \int_T \nabla u_l \cdot 
	\overline{\nabla v_k} \nonumber \\
& = & \sum_{l=1}^7 \int_T \nabla u_l \cdot \overline{\nabla (\Phi^*v)_l}
= \tilde a_1(u, \Phi^* v)
. \label{eSdrumce4;1}
\end{eqnarray}
Using Proposition~\ref{pdrumce202} it follows that $\Phi \, \widetilde S^1_t = 
\widetilde S^2_t \, \Phi$ for all $t > 0$.
It is easy to verify that the matrix $B$ is invertible, implying that $\Phi$ is 
invertible as an operator on $L_2(T)^7$, and therefore 
\[
\widetilde S^1_t = \Phi^{-1} \, \widetilde S^2_t \, \Phi
\]
for all $t > 0$.
So the semigroups are similar.
If we define
\[
U = \Phi_2^{-1} \, \Phi \, \Phi_1
,  \]
then $U \colon  L_2(\Omega_1) \to L_2(\Omega_2)$ is an isomorphism such that 
\[
S^1_t = U^{-1} \, S^2_t \, U
\]
for all $t > 0$.
We have proved the following result.

\begin{thm} \label{tdrumce401}
The semigroups $S^1$ and $S^2$ are similar.
In particular, $\Delta^N_{\Omega_1}$ and $\Delta^N_{\Omega_2}$
are isospectral, i.e.\ they have the same sequence of
eigenvalues, even though $\Omega_1$ and $\Omega_2$ are not congruent.
\end{thm}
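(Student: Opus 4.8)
The plan is to establish the similarity of $S^1$ and $S^2$ by factoring it through the intermediate semigroups $\widetilde S^1$ and $\widetilde S^2$, which both live on the common space $L_2(T)^7$, and then to read off isospectrality from self-adjointness. Propositions~\ref{pdrumce3b03} and~\ref{pdrumce3b04} already supply the two outer similarities $\Phi_1^{-1}\widetilde S^1_t\Phi_1=S^1_t$ and $\Phi_2^{-1}\widetilde S^2_t\Phi_2=S^2_t$, so the substance of the argument is the middle step: to show that $\widetilde S^1$ and $\widetilde S^2$ are themselves similar via the operator $\Phi$ induced by the matrix $B$. Once this is in hand, composing the three similarities and setting $U=\Phi_2^{-1}\Phi\Phi_1$ immediately yields $S^1_t=U^{-1}S^2_t U$ for all $t>0$, which is the first assertion.

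To obtain the middle similarity I would apply the form criterion of Proposition~\ref{pdrumce202}, verifying Condition~\ref{pdrumce202-3} for $\Phi$ with respect to $\tilde a_1$ and $\tilde a_2$. This splits into two parts. First comes the form identity $\tilde a_2(\Phi u,v)=\tilde a_1(u,\Phi^*v)$ for $u\in V_1$ and $v\in V_2$: since both forms are merely the sum over the seven copies of $T$ of the Dirichlet integral and the entries $b_{kl}$ of $B$ are scalars, one simply transposes the matrix from the first argument into the second, as in~(\ref{eSdrumce4;1}); this is routine. Second, and this is where the real content lies, one must check the two inclusions $\Phi(V_1)\subset V_2$ and $\Phi^*(V_2)\subset V_1$.

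The hard part will be precisely these inclusions, for they are where the combinatorics of the two different gluings enter. A generic scalar matrix would destroy the trace conditions defining $V_1$ and $V_2$; the specific pattern of $B$ is what encodes how the seven triangles are reassembled, in the spirit of Sunada's theorem and B\'erard's realization of it. Concretely, I would fix a side, say $\Gamma_1$, write out the constraint that $\Phi u$ must satisfy on $\Gamma_1$ to lie in $V_2$, and then use the $V_1$-relations satisfied by $u$ together with the relevant rows of $B$ to verify that the required equality of traces indeed holds; the same is then done for $\Gamma_2$ and $\Gamma_3$, and analogously for $\Phi^*$ and $V_2\to V_1$ using $B^*$. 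This is the central calculation alluded to before~(\ref{eSdrumce4;1}), and it is the only place where the non-congruence of $\Omega_1$ and $\Omega_2$ is reconciled with their spectral coincidence.

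Finally, since $B$ is invertible, $\Phi$ is an isomorphism of $L_2(T)^7$, so the intertwining $\Phi\widetilde S^1_t=\widetilde S^2_t\Phi$ upgrades to the genuine similarity $\widetilde S^1_t=\Phi^{-1}\widetilde S^2_t\Phi$, completing the middle step. For the `in particular' clause I would invoke the self-adjoint, compact-resolvent case of Corollary~\ref{cdrumce203}: as $\Delta^N_{\Omega_1}$ and $\Delta^N_{\Omega_2}$ are self-adjoint with compact resolvent and similar, their spectra coincide, and because the invertible intertwiner $U$ maps each eigenspace of one operator isomorphically onto the corresponding eigenspace of the other, the eigenvalues agree with multiplicities. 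Non-congruence of the two polygons is geometrically evident, so the full statement follows.
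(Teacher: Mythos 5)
Your proposal is correct and follows essentially the same route as the paper: factor the similarity through $\widetilde S^1$ and $\widetilde S^2$ on $L_2(T)^7$ via Propositions~\ref{pdrumce3b03} and~\ref{pdrumce3b04}, verify Condition~\ref{pdrumce202-3} of Proposition~\ref{pdrumce202} for the operator $\Phi$ induced by $B$ (the form identity~(\ref{eSdrumce4;1}) plus the two inclusions $\Phi(V_1)\subset V_2$, $\Phi^*(V_2)\subset V_1$), and use invertibility of $B$ to upgrade the intertwining to a similarity with $U=\Phi_2^{-1}\Phi\Phi_1$. You correctly identify the trace-condition inclusions as the central calculation, and your treatment of the `in particular' clause via self-adjointness and compact resolvent matches the paper's framework.
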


\section{Order properties of the similarity transform} \label{Sdrumce4}

We keep the notation of the previous section and consider the intertwining 
isomorphism $U \colon L_2(\Omega_1) \to L_2(\Omega_2)$ more closely.
Recall that a linear map $R \colon L_2(\Omega_1) \to L_2(\Omega_2)$
is called {\bf positive} if $f \geq 0$ implies $R f \geq 0$ for all 
$f \in L_2(\Omega_1)$.
We then write $R \geq 0$.
One calls $R$ a {\bf lattice homomorphism} if 
$R(f \vee g) = Rf \vee Rg$ for all $f,g \in L_2(\Omega_1,\Ri)$.
The map $R$ is called {\bf disjointness preserving} if 
$f \cdot g = 0$ a.e.\ implies $(Rf) \cdot (Rg) = 0$ a.e.\ for all 
$f,g \in L_2(\Omega_1)$.
It is well known that $R$ is a lattice homomorphism if and only if 
$R$ is positive and disjointness preserving.
Finally, we call $R$ an {\bf order isomorphism} or a 
{\bf lattice isomorphism} if $R$ is bijective and both 
$R$ and $R^{-1}$ are positive.
This is equivalent to $R$ being a bijective lattice homomorphism.

We recall from \cite{Are3} Theorem~3.20 the following result.

\begin{thm} \label{tdrumce402}
Let $\Omega_1,\Omega_2 \subset \Ri^d$ be two Lipschitz domains.
If there exists an order isomorphism
$U \colon L_2(\Omega_1) \to L_2(\Omega_2)$ such that 
\[
U \, S^1_t
= S^2_t \, U
\]
for all $t > 0$, then $\Omega_1$ and $\Omega_2$ are congruent.
\end{thm}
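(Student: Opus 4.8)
The plan is to exploit the rigidity of the Neumann heat semigroup under an order-preserving intertwining: first reduce $U$ to a weighted composition operator, then fix the weight using conservativeness, and finally read the Euclidean metric off the exact intertwining of the two Laplacians. To begin, I would invoke the classical representation of lattice isomorphisms between $L_2$-spaces: since $U$ is an order isomorphism, there exist a point isomorphism $\varphi \colon \Omega_2 \to \Omega_1$ (a bimeasurable bijection, defined up to null sets) and a measurable weight $h > 0$ on $\Omega_2$ such that $Uf = h \cdot (f \circ \varphi)$ for all $f \in L_2(\Omega_1)$. The entire problem then becomes to show that $\varphi$ agrees almost everywhere with a Euclidean isometry carrying $\Omega_2$ onto $\Omega_1$.

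Next I would pin down the weight using the conservativeness of the Neumann semigroup. Since $\one \in D(\Delta^N_{\Omega_i})$ with $\Delta^N_{\Omega_i} \one = 0$, one has $S^i_t \one = \one$; the intertwining $U S^1_t = S^2_t U$ then shows that $U\one$ is fixed by every $S^2_t$. As $\Omega_2$ is a (connected) domain, the only such fixed vectors are the constants, so $U\one = c\,\one$ for some $c > 0$. But $U\one = h \cdot (\one \circ \varphi) = h$, whence $h \equiv c$ is constant and $U = c\, C_\varphi$ is a scalar multiple of the pure composition operator $C_\varphi f = f \circ \varphi$.

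The heart of the argument is to extract the Euclidean structure from the exact intertwining of generators. By Proposition~\ref{pdrumce202} the relation $U S^1_t = S^2_t U$ is equivalent to $U(D(\Delta^N_{\Omega_1})) \subset D(\Delta^N_{\Omega_2})$ together with $\Delta^N_{\Omega_2} U f = U \Delta^N_{\Omega_1} f$. Testing this on smooth $f$, where the Neumann realization acts as the distributional Laplacian, and using $U = c\, C_\varphi$, I would compute $\Delta(f \circ \varphi)$ by the chain rule and match it against $(\Delta f) \circ \varphi$. Matching the second-order part forces $\nabla \varphi_i \cdot \nabla \varphi_j = \delta_{ij}$ almost everywhere, that is,
\[
(D\varphi)(D\varphi)^T = I \quad\text{a.e.,}\qquad\text{so}\qquad D\varphi \in O(d) \text{ a.e.,}
\]
while matching the first-order part forces each component $\varphi_i$ to be harmonic. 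A harmonic map whose Jacobian is a.e.\ orthogonal has, by a Liouville-type rigidity (e.g.\ via the Bochner identity, which gives $\Delta |\nabla \varphi_i|^2 = 2|D^2\varphi_i|^2$ and hence $D^2\varphi_i = 0$ once $|\nabla\varphi_i| \equiv 1$), a locally constant derivative, so $\varphi(x) = Qx + b$ with $Q \in O(d)$: a Euclidean isometry. Consequently $\varphi(\Omega_2) = \Omega_1$ up to a null set, and since both are Lipschitz domains this a.e.\ identity for a rigid motion extends to the closures, yielding the asserted congruence.

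The step I expect to be the main obstacle is regularity: a priori $\varphi$ is only measurable, so neither the chain-rule computation for $\Delta(f\circ\varphi)$ nor the passage from ``$D\varphi \in O(d)$ a.e.'' to ``$\varphi$ affine'' is justified as stated. I would close this gap with a bootstrap based on the smoothing property of the heat semigroup: interior elliptic regularity gives $S^i_t(L_2) \subset C^\infty$ on the interior, and transporting this regularity across $U = c\,C_\varphi$ via the intertwining forces $\varphi$ to be, locally in the interior, a Sobolev (indeed smooth) homeomorphism to which the chain rule and Rademacher's theorem apply. The a.e.\ orthogonality of $D\varphi$ then integrates up to the affine form, and the Lipschitz regularity of the boundaries upgrades the a.e.\ identity $\varphi(\Omega_2) = \Omega_1$ to a genuine congruence of $\overline{\Omega_1}$ and $\overline{\Omega_2}$.
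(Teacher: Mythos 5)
A preliminary remark: the paper does not actually prove Theorem~\ref{tdrumce402}; it is quoted from \cite{Are3}, Theorem~3.20, so there is no internal proof to compare with. Your architecture (order isomorphism $=$ weighted composition operator $Uf=h\,(f\circ\varphi)$, normalisation of the weight via conservativeness of the Neumann semigroup, extraction of an a.e.\ orthogonal differential, Liouville rigidity) is essentially the strategy of that reference, and the first two steps are fine (modulo stating that the $\Omega_i$ are bounded and connected, which you need for $\one\in L_2(\Omega_i)$ and for ``the fixed vectors of $S^2$ are the constants'').

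The genuine gap is in the step you yourself flag, and your proposed patch does not close it. First, ``transporting regularity across $U=c\,C_\varphi$ forces $\varphi$ to be locally smooth'' is an assertion, not an argument: the intertwining only tells you that $g\circ\varphi$ has a smooth representative for every $g$ in the range of $S^1_t$. To get information about $\varphi$ you must exhibit $d$ such functions whose gradients are linearly independent at a prescribed point of $\Omega_1$ (so that they form local coordinates) and then invert; even then you obtain a continuous version of $\varphi$ only on the measurable set $\varphi^{-1}(V)$, which you have not shown to be essentially open, so the chain-rule computation of $\Delta(f\circ\varphi)$ and the ``matching of coefficients'' remain unjustified --- they presuppose $\varphi\in W^{2,\cdot}_{\mathrm{loc}}$, which is exactly what is missing. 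Second, the Bochner-identity version of Liouville's theorem that you invoke requires $\varphi$ to be (at least) $C^2$, which again is circular at this stage. The workable route is to stay at first order: use the intertwining of the \emph{forms} (cf.\ Proposition~\ref{pdrumce202}\ref{pdrumce202-3} and Corollary~\ref{cdrumce204}, after reducing to a unitary intertwiner via Proposition~\ref{pdrumce205}) to show directly that $\varphi\in W^{1,2}_{\mathrm{loc}}$ with $D\varphi\in O(d)$ a.e., and then apply the \emph{generalized} Liouville--Reshetnyak rigidity theorem for Sobolev mappings with a.e.\ orthogonal differential, which needs no second derivatives. That is, in substance, how \cite{Are3} proceeds; as written, your argument has a hole precisely where the theorem is hard.
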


Here, as before, $S^1$ and $S^2$ are the semigroups generated 
by $\Delta_{\Omega_1}^N$ and $\Delta_{\Omega_2}^N$, respectively.
Using Theorem~\ref{tdrumce402} it follows that the similarity transform $U$ 
in Theorem~\ref{tdrumce401} is not an order isomorphism.
In fact, $U = \Phi_2^{-1} \, \Phi \, \Phi_1$.
Recall that $\Phi \colon L_2(T)^7 \to L_2(T)^7$ is 
given by the matrix $B$, which is clearly positive.
Thus $\Phi$ is a positive map.
Since $\Phi_1$ and $\Phi_2$ are order isomorphisms, 
$U$ is also positive.
Hence $\Phi^{-1}$, and equivalently also $U^{-1}$, is not positive.

It is easy to see that a map from $L_2(T)^7$ into $L_2(T)^7$
given by a matrix as above is disjointness preserving if and only if 
each row in the matrix has at most one nonzero entry.
By way of contrast, our matrix $B$ has three nonzero entries in each row.
It follows that our $\Phi$ is the sum of three lattice homomorphisms.
This shows directly that $\Phi$ and $U$ are not disjointness preserving.

Finally, we mention that the intertwining isomorphism $U$ and the 
matrix $B$ that induces it are not unique. If we let $\one$ denote 
the $7 \times 7$ matrix whose $(k,l)$th-entry is $1$ for all $k,l\in\{1,\ldots,7\}$, 
and define $\widehat B:= \alpha (\one-B) + \gamma B$, then it may be verified that 
$\widehat B$ gives rise in the same way to another intertwining 
isomorphism $\widehat \Phi$, provided that the coefficients $\alpha,\gamma \in \Ri$ 
satisfy basic non-degeneracy conditions.
Our original matrix $B$ and the similarity transform $\Phi$ that it induces 
are easily seen to be normal but not unitary.
We know from Section~\ref{Sdrumce2} that in such a case one can always find 
a unitary transform related to $\Phi$, for example, via the polar decomposition 
$\Phi = U \, |\Phi|$.
However, if we choose the coefficients $\alpha$ and $\gamma$ appropriately, 
namely, as a pair of simultaneous solutions to $4\alpha^2 + 3\gamma^2 = 1$ and 
$2\alpha^2 + 4\alpha\gamma + \gamma^2 = 0$, then it is easy to check that 
$(\widehat B)^* \, \widehat B = I$, that is, the matrix $\widehat B$ is unitary. 
In this case, the similarity transform associated with $\widehat B$ is also 
unitary, and one may check that one of the operators thus obtained coincides 
with the $U$ obtained from the polar decomposition of our original transform $\Phi$.

We note that B\'erard \cite{Ber2} assumes from the beginning of his construction 
that his matrix is orthogonal by imposing a restriction equivalent to the one just 
stated for $\alpha$ and $\gamma$. 
The cases $\alpha=0$, $\gamma=1$ and $\alpha=\gamma=1$ (for which the 
respective matrices are not orthogonal) correspond respectively to the mappings 
$T_3$ and $T_4$ considered in \cite{BCDS} Section~2.

\section{Isospectral domains for general elliptic operators} \label{Sdrumce5}

We will now generalize our construction from Section~\ref{Sdrumce3} to allow for 
general non-self-adjoint elliptic operators on $L_2(\Omega_1)$ and $L_2(\Omega_2)$.
These operators still have compact resolvent, but are in general not self-adjoint. 
Hence the original formulation involving isospectrality is not strong enough. 
It turns out, however, that the machinery of the previous sections works in exactly 
the same fashion to give the desired similarity of the operators in the general case.
We start with a basic lemma describing how elliptic differential sectorial forms transform 
under isometries.

\begin{lemma} \label{ldrumce501}
Let $\Omega \subset \Ri^d$ be an open set, 
$C = (c_{ij})_{i,j \in \{1,\ldots,d \} } \colon \Omega \to M_{d \times d}(\Ci)$
a bounded measurable map and $\tau$ an isometry.
Define $a \colon H^1(\Omega) \times H^1(\Omega) \to \Ci$ by 
\begin{equation}
a(u,v) 
= \int_\Omega \sum_{i,j=1}^d c_{ij} \, (\partial_i u) \, \overline{\partial_j v}
\label{ldrumce501;2}
\end{equation}
and $\widehat \Omega = \tau(\Omega)$.
Define the bounded measurable map
$C_\tau = \widehat C = (\hat c_{ij})_{i,j \in \{1,\ldots,d \} } \colon \widehat \Omega \to M_{d \times d}(\Ci)$
by
\begin{equation}
C_\tau(y) 
= \widehat C(y) 
= (D \tau) \, C(\tau^{-1}(y)) \, (D \tau)^{-1}
,  
\label{eldrumce501;1}
\end{equation}
where $D \tau$ denotes the derivative of $\tau$.
Define the form $\hat a \colon H^1(\widehat \Omega) \times H^1(\widehat \Omega) \to \Ci$
by 
\[
\hat a(u,v) 
= \int_{\widehat \Omega} \sum_{i,j=1}^d \hat c_{ij} \, (\partial_i u) \, \overline{\partial_j v}
.  \]
Then $\hat a(u,v) = a(u \circ \tau, v \circ \tau)$ for all 
$u,v \in H^1(\widehat \Omega)$.
\end{lemma}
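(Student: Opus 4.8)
The plan is to reduce the statement to the chain rule for composition with an affine map followed by a change of variables, exploiting the fact that the derivative of a Euclidean isometry is a constant orthogonal matrix. First I would record the structure of $\tau$: every isometry of $\Ri^d$ has the form $\tau(x) = Qx + b$ with $Q$ orthogonal and $b \in \Ri^d$, so that $D\tau = Q$ is constant, $(D\tau)^{-1} = Q^{-1} = Q^T$, and $|\det D\tau| = 1$. Because $\tau$ is affine, composition preserves Sobolev regularity (as already noted before the lemma for $u \mapsto u \circ \tau$), and the chain rule holds for the weak derivatives, giving
\[
\partial_i(u \circ \tau) = \sum_{k=1}^d Q_{ki} \, (\partial_k u)\circ\tau
\]
for $u \in H^1(\widehat\Omega)$, and likewise for $v$.

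Next I would substitute these two identities into the defining integral for $a(u\circ\tau, v\circ\tau)$. Since $Q$ is real, the complex conjugation acts only on the derivative factor, so one obtains
\[
a(u\circ\tau, v\circ\tau) = \int_\Omega \sum_{i,j,k,l} Q_{ki}\, c_{ij}\, Q_{lj}\, \big((\partial_k u)\circ\tau\big)\, \overline{\big((\partial_l v)\circ\tau\big)}.
\]
I would then perform the change of variables $y = \tau(x)$; because $|\det D\tau| = 1$, the volume element is preserved, the region of integration becomes $\widehat\Omega = \tau(\Omega)$, and $c_{ij}(x)$ is replaced by $c_{ij}(\tau^{-1}(y))$.

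The final step is purely algebraic: collecting the sum over the inner indices yields
\[
\sum_{i,j} Q_{ki}\, c_{ij}(\tau^{-1}(y))\, (Q^T)_{jl} = \big(Q\, C(\tau^{-1}(y))\, Q^{-1}\big)_{kl},
\]
which, since $Q = D\tau$, is exactly $\hat c_{kl}(y)$ by the transformation rule~(\ref{eldrumce501;1}). Recognizing this turns the integral into $\hat a(u,v)$, completing the proof.

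The only place demanding care is this index bookkeeping: one must keep the roles of $Q$ and $Q^T = Q^{-1}$ straight so that the conjugation emerges as $Q C Q^{-1}$ rather than its reverse, together with the observation that conjugation acts trivially on the real entries of $Q$. Neither of these is a genuine obstacle; the essential content is simply that the derivative of an isometry is a constant orthogonal transformation, whence the whole computation is an exercise in the chain rule and change of variables.
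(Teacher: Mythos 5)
Your proof is correct and follows essentially the same route as the paper's: chain rule for composition with the affine isometry, change of variables with unit Jacobian, and the observation that orthogonality of $D\tau$ turns the resulting coefficient matrix into $(D\tau)\,C\,(D\tau)^{-1}=\widehat C$. The only difference is cosmetic — you carry out the computation in index notation, whereas the paper packages it as $\int_\Omega \langle C^t\,(D\tau)^t((\nabla u)\circ\tau),\,(D\tau)^t((\nabla v)\circ\tau)\rangle$ and moves $(D\tau)^t$ across the inner product — and your index bookkeeping for $QCQ^{-1}$ checks out.
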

\begin{proof}
Denote by $\langle \cdot , \cdot \rangle$ the inner product on $\Ci^d$.
Then 
\begin{eqnarray*}
a(u \circ \tau, v \circ \tau)
& = & \int_\Omega \langle C^t \nabla(u \circ \tau), \nabla (v \circ \tau) \rangle  \\
& = & \int_\Omega \langle C^t \, (D \tau)^t \, ((\nabla u) \circ \tau), 
      (D \tau)^t \, ((\nabla v) \circ \tau) \rangle  \\
& = & \int_\Omega \langle (D \tau) \, C^t \, (D \tau)^t \, ((\nabla u) \circ \tau), 
      ((\nabla v) \circ \tau) \rangle  \\
& = & \int_{\widehat \Omega} 
    \langle (D \tau) \, (C^t \circ \tau^{-1}) \, (D \tau)^t \, \nabla u, \nabla v \rangle  \\
& = & \hat a(u,v)
\end{eqnarray*}
as required.
\end{proof}

Let $\Omega \subset \Ri^d$ be an open set and 
$C = (c_{ij})_{i,j \in \{1,\ldots,d \} } \colon \Omega \to M_{d \times d}(\Ci)$
a bounded measurable map.
Define $a \colon H^1(\Omega) \times H^1(\Omega) \to \Ci$ by 
\[
a(u,v) 
= \int_\Omega \sum_{i,j=1}^d c_{ij} \, (\partial_i u) \, \overline{\partial_j v}
.  \]
Suppose that there exists a $\mu > 0$ such that 
\begin{equation}
\RRe \sum_{i,j=1}^d c_{ij} \, \xi_i \, \overline{\xi_j} 
\geq \mu \, |\xi|^2 \qquad \mbox{ for all } \xi \in \Ci^d
\label{eSdrumce5;1}
\end{equation}
almost everywhere on $\Omega$.
Then the form $a$ is elliptic.
Let $A$ be the operator associated with the form~$a$ on $L_2(\Omega)$.
Note that $A$ is self-adjoint if $c_{ij} = \overline{c_{ji}}$ a.e.\ for all 
$i,j \in \{ 1,\ldots,d \} $.
We emphasize that we do not assume this.
If $\Omega$ is bounded and Lipschitz, then by a result of Auscher--Tchamitchian
\cite{AT5} the form $a$ has the square root property on $L_2(\Omega)$.
Also note that if $C$ is the identity matrix, then $A$ is the Neumann Laplacian.
If $\tau$ is an isometry and $\widehat C$ is as in Lemma~\ref{ldrumce501}, 
then also $\widehat C$ is the identity matrix. 
So the Neumann Laplacian is transformed into the Neumann Laplacian, and 
the proof of (\ref{edrumce3b;1}) is a special case of the previous lemma.
This is one of the remarkable properties of the Laplacian.
However, if we consider an elliptic operator, then we have to take into 
account the conjugation with the derivative of the isometry.

Next, let $C$ be a bounded measurable elliptic matrix valued function on our reference triangle~$T$.
Thus 
\[
C = (c_{ij})_{i,j \in \{1,2 \} } \colon T \to M_{2 \times 2}(\Ci)
\]
is a bounded measurable map satisfying the ellipticity condition
(\ref{eSdrumce5;1}).
Let $\Omega_1$ and $\Omega_2$ be the two propellers as before. 
Define the form $a_1 \colon H^1(\Omega_1) \times H^1(\Omega_1) \to \Ci$ by 
\begin{equation}
a_1(u,v) = \sum_{k=1}^7 \int_{T_k} (C_{\tau_k})_{ij} \, (\partial_i u) \, \overline{\partial_j v}
,  \label{eSdrumce5;6}
\end{equation}
where for all $k \in \{ 1,\ldots,7 \} $ the isometry $\tau_k$ is as in Section~\ref{Sdrumce3}
and $C_{\tau_k}$ is defined in (\ref{eldrumce501;1}). 
We define the form $a_2 \colon H^1(\Omega_2) \times H^1(\Omega_2) \to \Ci$ 
analogously. 
Let $n \in \{ 1,2 \} $.
Then $a_n$ is elliptic.
Let $A_n$ be the operator associated with $a_n$ on $L_2(\Omega_n)$ and
let $S^n$ be the semigroup generated by $- A_n$ on $L_2(\Omega_n)$.
Next define the form $\tilde a \colon H^1(T) \times H^1(T) \to \Ci$ by 
\begin{equation}
\tilde a(u,v) = \sum_{i,j=1}^2 \int_T c_{ij} \, (\partial_i u) \, \overline{\partial_j v}
.  \label{eSdrumce5;5}
\end{equation}
Moreover, define the form $\tilde a_n \colon V_n \times V_n \to \Ci$ by 
\begin{equation}
\tilde a_n(u,v)
= \sum_{k=1}^7 \tilde a(u_k,v_k)
.  \label{eSdrumce5;7}
\end{equation}
Let $\widetilde A_n$ be the operator associated with $\tilde a_n$ on $L_2(T)^7$
and let $\widetilde S^n$ be the semigroup generated by $- \widetilde A_n$ on $L_2(T)^7$.

Using Lemma~\ref{ldrumce501} it follows as in Section~\ref{Sdrumce3} that 
\[
\Phi_n \, S^n_t \, (\Phi_n)^{-1} 
= \widetilde S^n_t
\]
for all $t > 0$, where $\Phi_n$ is the {\em same} transform as in 
Section~\ref{Sdrumce3}.
Arguing as in Section~\ref{Sdrumce3} one has 
\[
\Phi \, \widetilde S^1_t = \widetilde S^2_t \, \Phi
\]
for all $t > 0$, where surprisingly $\Phi$ is, again, the same transform as in 
Section~\ref{Sdrumce3}.

Therefore we have proved the following theorem.

\begin{thm} \label{tdrumce502}
Let $U = \Phi_2^{-1} \, \Phi \, \Phi_1$.
Then 
\[
S^1_t = U^{-1} \, S^2_t \, U
\]
for all $t > 0$.
In particular, the operators $A_1$ on $L_2(\Omega_1)$ and 
$A_2$ on $L_2(\Omega_2)$ are similar even though $\Omega_1$ and $\Omega_2$
are not congruent.
\end{thm}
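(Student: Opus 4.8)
The plan is to assemble the result from three separate similarities, exactly mirroring the structure used in Section~\ref{Sdrumce3} for the Neumann Laplacian, but now replacing the scalar gradient form with the matrix-valued form built from $C$. The key observation is that the whole construction factors through the intermediate space $L_2(T)^7$: the transforms $\Phi_1$ and $\Phi_2$ carry the large-domain operators $A_1$ and $A_2$ onto $\widetilde A_1$ and $\widetilde A_2$, and the single matrix $\Phi$ intertwines $\widetilde A_1$ and $\widetilde A_2$ between themselves. The crucial point—and the one that makes the theorem work—is that the matrix $B$, and hence $\Phi$, is literally unchanged from Section~\ref{Sdrumce3}; only the forms have been altered. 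The first thing I would verify is that the two outer similarities still hold in the matrix-valued setting.

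First I would establish that $\Phi_n \, S^n_t \, \Phi_n^{-1} = \widetilde S^n_t$ for $n \in \{1,2\}$ and all $t > 0$. Here the work done in Lemma~\ref{ldrumce501} is exactly what is needed: the form $a_n$ on $\Omega_n$ was \emph{defined} in (\ref{eSdrumce5;6}) by pulling back the triangle form $\tilde a$ through each isometry $\tau_k$ and conjugating the coefficient matrix by $D\tau_k$, which is precisely the transformation rule of the lemma. Consequently $\tilde a_n(\Phi_n u, \Phi_n v) = a_n(u,v)$ for all $u,v \in H^1(\Omega_n)$, by applying the lemma termwise over the seven triangles and summing, just as in the chain of equalities (\ref{edrumce3b;1}). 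Since $\Phi_n$ is unitary and maps $H^1(\Omega_n)$ onto $V_n$, Corollary~\ref{cdrumce204} then yields the asserted similarity of the semigroups. This step is essentially a bookkeeping exercise granted the lemma.

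Next I would verify the central intertwining $\Phi \, \widetilde S^1_t = \widetilde S^2_t \, \Phi$ for all $t > 0$ by checking Condition~\ref{pdrumce202-3} of Proposition~\ref{pdrumce202}. The inclusions $\Phi(V_1) \subset V_2$ and $\Phi^*(V_2) \subset V_1$ depend only on the combinatorial trace-matching relations defining $V_1$ and $V_2$, not on the form, so they are identical to the calculation already made in Section~\ref{Sdrumce3}. The form identity $\tilde a_2(\Phi u, v) = \tilde a_1(u, \Phi^* v)$ for $u \in V_1$, $v \in V_2$ is the genuinely new computation: expanding $(\Phi u)_k = \sum_l b_{kl} u_l$ in the definition (\ref{eSdrumce5;7}) of $\tilde a_2$, interchanging the two finite sums, and recognizing $\sum_k b_{kl} v_k = (\Phi^* v)_l$ reproduces the manipulation of (\ref{eSdrumce4;1}) verbatim, because each summand is now the single triangle form $\tilde a(u_l, v_k)$ rather than $\int_T \nabla u_l \cdot \overline{\nabla v_k}$. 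Invertibility of $B$ upgrades the one-sided intertwining to the two-sided relation $\widetilde S^1_t = \Phi^{-1}\,\widetilde S^2_t\,\Phi$. Finally, setting $U = \Phi_2^{-1}\,\Phi\,\Phi_1$ and composing the three similarities gives $S^1_t = U^{-1}\,S^2_t\,U$, and the similarity of $A_1$ and $A_2$ follows from Corollary~\ref{cdrumce203}.

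I do not expect a serious obstacle, since the architecture is inherited from Section~\ref{Sdrumce3}; the only place requiring real attention is confirming that the coefficient transformation (\ref{eldrumce501;1}) is consistent across the two different stitchings. The subtle point is that $a_1$ and $a_2$ use the \emph{same} reference matrix $C$ on $T$ but different families of isometries $\{\tau_k\}$, so I would check carefully that Lemma~\ref{ldrumce501}, applied with each domain's own isometries, still lands both triangle forms on the identical $\tilde a$ of (\ref{eSdrumce5;5}); this is what allows the single matrix $\Phi$ to intertwine them. Granting that consistency, the non-self-adjoint case is no harder than the self-adjoint one, and the conclusion is similarity rather than isospectrality precisely because $A_1,A_2$ need not be self-adjoint.
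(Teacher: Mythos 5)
Your proposal is correct and follows essentially the same route as the paper: the paper also applies Lemma~\ref{ldrumce501} to obtain $\Phi_n\,S^n_t\,\Phi_n^{-1}=\widetilde S^n_t$, then repeats the Section~\ref{Sdrumce3} argument (same inclusions $\Phi(V_1)\subset V_2$, $\Phi^*(V_2)\subset V_1$ and the sum-interchange computation of (\ref{eSdrumce4;1}) with $\tilde a(u_l,v_k)$ in place of $\int_T\nabla u_l\cdot\overline{\nabla v_k}$) to get $\Phi\,\widetilde S^1_t=\widetilde S^2_t\,\Phi$ via Proposition~\ref{pdrumce202}, and composes the three similarities. Your closing remark about checking that both families of isometries pull the forms $a_1$ and $a_2$ back to the same reference form $\tilde a$ is exactly the point the construction (\ref{eSdrumce5;6}) is designed to guarantee.
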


\section{Isospectral elliptic operators with Dirichlet boundary conditions} \label{Sdrumce6}

In this section we wish to extend Theorem~\ref{tdrumce502} to the case
of Dirichlet boundary conditions.
All the arguments are the same as before, but now we have to impose more 
boundary conditions on the Sobolev spaces.
Let $\Omega \subset \Ri^d$ be open and 
$C = (c_{ij}) \colon \Omega \to M_{d \times d}(\Ci)$
a bounded measurable map.
Assume that $C$ satisfies the ellipticity condition (\ref{eSdrumce5;1}).
Let $a$ be as in (\ref{ldrumce501;2}) and let $a^D = a|_{H^1_0(\Omega) \times H^1_0(\Omega)}$,
where $H^1_0(\Omega)$ is the closure of $C_c^\infty(\Omega)$ in 
$H^1(\Omega)$.
Then the operator associated with $a^D$ in $L_2(\Omega)$ is the 
corresponding elliptic differential operator with Dirichlet boundary conditions.
For domains with Lipschitz boundary there is a useful characterization
of the Sobolev space $H^1_0(\Omega)$.

\begin{lemma} \label{ldrumce601}
Suppose $\Omega \subset \Ri^d$ is open and $\Omega$ has a Lipschitz boundary.
Then 
\[
H^1_0(\Omega)
= \{ u \in H^1(\Omega) : \Tr u = 0 \ \sigma \mbox{-a.e.} \}
.  \]
\end{lemma}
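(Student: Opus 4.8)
The plan is to prove the two inclusions separately. The inclusion $H^1_0(\Omega) \subseteq \{ u \in H^1(\Omega) : \Tr u = 0 \}$ is the easy one and follows directly from the continuity of the trace operator. Indeed, if $u \in H^1_0(\Omega)$ then by definition there exist $\varphi_n \in C_c^\infty(\Omega)$ with $\varphi_n \to u$ in $H^1(\Omega)$. Each $\varphi_n$ vanishes in a neighbourhood of $\Gamma$, so $\Tr \varphi_n = \varphi_n|_\Gamma = 0$; since $\Tr \colon H^1(\Omega) \to L_2(\Gamma)$ is bounded, passing to the limit gives $\Tr u = 0$ $\sigma$-a.e.

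For the reverse inclusion I would proceed in three steps. First, given $u \in H^1(\Omega)$ with $\Tr u = 0$, I claim its extension by zero $\tilde u$ (defined to vanish off $\Omega$) lies in $H^1(\Ri^d)$, with weak derivative equal to the zero-extension of $\partial_j u$. This is exactly where the zero trace condition enters: for any $\varphi \in C_c^\infty(\Ri^d)$, applying the integration-by-parts formula recorded earlier to $u$ and $\varphi|_\Omega \in H^1(\Omega)$ gives
\[
\int_{\Ri^d} \tilde u \, \partial_j \varphi = \int_\Omega u \, \partial_j \varphi = -\int_\Omega (\partial_j u)\, \varphi + \int_\Gamma \nu_j \, (\Tr u) \, \varphi = -\int_\Omega (\partial_j u)\, \varphi,
\]
so the boundary term drops out and the distributional derivative of $\tilde u$ is the $L_2$-function $\widetilde{\partial_j u}$. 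Hence $\tilde u \in H^1(\Ri^d)$ with support contained in $\overline\Omega$.

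Second, I would use the Lipschitz structure of $\Gamma$ to translate $\tilde u$ into the interior. Covering $\Gamma$ by finitely many charts in which $\Omega$ is the region lying (locally) above a Lipschitz graph, and using a subordinate partition of unity, each boundary piece can be shifted a small distance along a fixed direction transverse to the graph so that its support is pushed strictly inside $\Omega$. Because translation is continuous on $H^1(\Ri^d)$, this can be done at arbitrarily small cost in the $H^1$-norm, producing a function agreeing with $u$ up to a small error and whose support is compactly contained in $\Omega$. Third, mollifying this compactly-supported function yields an element of $C_c^\infty(\Omega)$ within a further small $H^1$-error. Combining the estimates shows that $u$ lies in the $H^1$-closure of $C_c^\infty(\Omega)$, that is, $u \in H^1_0(\Omega)$.

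The main obstacle is the second step: making the inward translation rigorous for a general Lipschitz boundary. The difficulty is that there is no single global direction along which $\overline\Omega$ is pushed into $\Omega$, so one must work locally in Lipschitz charts, keep track of the bi-Lipschitz changes of variable (which preserve $H^1$ but distort the geometry), and verify that the local inward shifts, glued together via the partition of unity, genuinely separate the support from $\Gamma$ while keeping the $H^1$-perturbation small. The first and third steps, by contrast, are routine once the zero-trace extension is in hand.
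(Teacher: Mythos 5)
Your argument is correct, but note that the paper does not actually prove this lemma: its ``proof'' is a citation to Alt's textbook (Lemma~A.6.10), so the comparison here is with the standard literature rather than with an argument given in the paper. What you have written is essentially that standard textbook proof: the inclusion $H^1_0(\Omega) \subseteq \{u : \Tr u = 0\}$ via continuity of the trace, and the converse via zero-extension --- where the vanishing trace kills the boundary term in the integration-by-parts formula, so that $\tilde u \in H^1(\Ri^d)$ with support in $\overline\Omega$ --- followed by inward translation in Lipschitz graph charts glued with a partition of unity, and mollification. Two small points you would need to tighten in a full write-up. First, the integration-by-parts formula you invoke is stated in the paper only for planar polygons; you should justify (or cite) its validity for Lipschitz domains in $\Ri^d$, which is where the Lipschitz hypothesis and the trace operator really enter. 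Second, the lemma as stated assumes only that $\Omega$ is open with Lipschitz boundary, not that it is bounded, so covering $\Gamma$ by finitely many charts requires compactness of the boundary; for the paper's applications (polygons and bounded Lipschitz domains) this is harmless, but strictly speaking one should either add boundedness or use a locally finite covering. You correctly identify the inward translation as the technical crux; carried out as you describe, with the translation direction taken to be the axis direction of each graph chart (which points uniformly into $\Omega$ by the Lipschitz cone condition, so the translated support lies at positive distance from $\Gamma$ and can then be mollified), the argument goes through.
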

\begin{proof}
See \cite{Alt} Lemma~A.6.10.
\end{proof}

Now we return to the two propellers.
Let 
\[
C = (c_{ij})_{i,j \in \{1,2 \} } \colon T \to M_{2 \times 2}(\Ci)
\]
be a bounded measurable map satisfying the ellipticity condition
(\ref{eSdrumce5;1}).
Let $n \in \{ 1,2 \} $.
Let $a_n \colon H^1(\Omega_n) \times H^1(\Omega_n) \to \Ci$ be as in (\ref{eSdrumce5;6})
and set 
\[
a_n^D = a_n|_{H^1_0(\Omega_n) \times H^1_0(\Omega_n)}
.  \]
Let $A_n^D$ be the operator associated with $a_n^D$
on $L_2(\Omega_n)$ and let $S^{D,n}$ be the semigroup generated 
by $-A_n^D$.
Let $\Phi_n \colon L_2(\Omega_n) \to L_2(T)^7$ be as in Section~\ref{Sdrumce3}.
Define $V_n^D = \Phi_n(H^1_0(\Omega_n))$.
Let $\tilde a \colon H^1(T) \times H^1(T) \to \Ci$ be as in (\ref{eSdrumce5;5}).
Define $\tilde a_n^D \colon V_n^D \times V_n^D \to \Ci$ by 
\[
\tilde a_n^D(u,v)
= \sum_{k=1}^7 \tilde a(u_k,v_k)
.  \]
So $\tilde a_n^D = \tilde a_n|_{V_n^D \times V_n^D}$,
where $\tilde a_n$ is as in (\ref{eSdrumce5;7}).
Let $\widetilde A_n^D$ be the operator associated with $\tilde a_n^D$
on $L_2(T)^7$ and let $\widetilde S^{D,n}$ be the semigroup
generated by $- \widetilde A_n^D$ on $L_2(T)^7$.
Then as before one has 
\[
\Phi_n \, S^{D,n}_t \, \Phi_n^{-1} 
= \widetilde S^{D,n}_t
\]
for all $t > 0$.

Next we determine $V_n^D$.
Since Lemma~\ref{ldrumce601} imposes boundary conditions on the 
9 parts of the boundary of $\Omega_n$, one has 
\begin{eqnarray*}
V_1^D = \{ (u_1,\ldots,u_7) \in H^1(T)^7 
& : & u_1 = u_2 \mbox{ and } u_4 = u_7 \mbox{ and } u_3 = u_5 = u_6 = 0 \mbox{ on } \Gamma_1  \\
& & u_1 = u_3 \mbox{ and } u_2 = u_5 \mbox{ and } u_4 = u_6 = u_7 = 0  \mbox{ on } \Gamma_2  \\
& & u_1 = u_4 \mbox{ and } u_3 = u_6 \mbox{ and } u_2 = u_5 = u_7 = 0  \mbox{ on } \Gamma_3 \}
\end{eqnarray*}
and 
\begin{eqnarray*}
V_2^D = \{ (u_1,\ldots,u_7) \in H^1(T)^7 
& : & u_1 = u_2 \mbox{ and } u_3 = u_6 \mbox{ and } u_4 = u_5 = u_7 = 0  \mbox{ on } \Gamma_1  \\
& & u_1 = u_3 \mbox{ and } u_4 = u_7 \mbox{ and } u_2 = u_5 = u_6 = 0  \mbox{ on } \Gamma_2  \\
& & u_1 = u_4 \mbox{ and } u_2 = u_5 \mbox{ and } u_3 = u_6 = u_7 = 0  \mbox{ on } \Gamma_3 \}
.
\end{eqnarray*}
Now define $B^D = (b^D_{kl})_{k,l \in \{ 1,\ldots,7 \} } \colon \Ri^7 \to \Ri^7$ by 
\[
B^D = \left( \begin{array}{ccccccc}
 0 & 1 & 1 & 1 & 0 & 0 & 0  \\
 1 & 0 & -1 & 0 & 0 & 0 & 1  \\
 1 & 0 & 0 & -1 & 1 & 0 & 0  \\
 1 & -1 & 0 & 0 & 0 & 1 & 0  \\
 0 & 0 & 0 & 1 & 0 & -1 & -1  \\
 0 & 1 & 0 & 0 & -1 & 0 & -1  \\
 0 & 0 & 1 & 0 & -1 & -1 & 0  
           \end{array} \right)
\]
and define $\Phi^D \colon L_2(T)^7 \to L_2(T)^7$ by 
\[
(\Phi^D u)_k = \sum_{l=1}^7 b^D_{kl} \, u_l
.  \]
A surprising but simple calculation shows that 
\[
\Phi^D(V^D_1) \subset V^D_2 
\quad \mbox{and} \quad (\Phi^D)^*(V^D_2) \subset V^D_1
.  \]
Literally the same argument as in (\ref{eSdrumce4;1}) gives 
\[
\tilde a^D_2(\Phi^D u, v)
= \tilde a^D_1(u, (\Phi^D)^* v)
\]
for all $u \in V^D_1$ and $v \in V^D_2$.
Therefore Proposition~\ref{pdrumce202} implies that 
\[
\Phi^D \, \widetilde S^{D,1}_t = \widetilde S^{D,2}_t \, \Phi^D
\]
for all $t > 0$.
Hence we have extended everything for Dirichlet boundary conditions.

\begin{thm} \label{tdrumce602}
Let $U^D = \Phi_2^{-1} \, \Phi^D \, \Phi_1$.
Then 
\[
S^{D,1}_t = (U^D)^{-1} \, S^{D,2}_t \, U^D
\]
for all $t > 0$.
In particular, the operators $A^D_1$ on $L_2(\Omega_1)$ and 
$A^D_2$ on $L_2(\Omega_2)$ are isospectral even though $\Omega_1$ and $\Omega_2$
are not congruent.
\end{thm}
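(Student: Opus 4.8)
The plan is to follow exactly the blueprint already laid down in Section~\ref{Sdrumce3} and Section~\ref{Sdrumce5}, adapting it to the Dirichlet setting by restricting all forms to the closed subspaces $V_n^D \subset V_n$ and replacing the matrix $B$ with the signed matrix $B^D$. The chain of reductions is the same as before: first I would transport the problem from the polygons $\Omega_n$ to the model space $L_2(T)^7$ via the unitaries $\Phi_n$, then intertwine the two model operators $\widetilde A_1^D$ and $\widetilde A_2^D$ by means of $\Phi^D$, and finally compose to obtain the isomorphism $U^D = \Phi_2^{-1} \, \Phi^D \, \Phi_1$ on the original spaces. The key inputs are that $\Phi_n \, S^{D,n}_t \, \Phi_n^{-1} = \widetilde S^{D,n}_t$, which follows precisely as in Section~\ref{Sdrumce3} using Lemma~\ref{ldrumce501} to handle the coefficient matrices, together with Lemma~\ref{ldrumce601} to identify $V_n^D$ explicitly as the trace-zero conditions displayed above.

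The heart of the argument is the verification that $\Phi^D$ intertwines the two model semigroups, for which I would invoke Proposition~\ref{pdrumce202}, specifically the implication \ref{pdrumce202-3}$\Rightarrow$\ref{pdrumce202-1}. This requires checking three things: that $\Phi^D(V_1^D) \subset V_2^D$, that $(\Phi^D)^*(V_2^D) \subset V_1^D$, and that the forms satisfy the adjoint relation $\tilde a_2^D(\Phi^D u, v) = \tilde a_1^D(u, (\Phi^D)^* v)$ for all $u \in V_1^D$ and $v \in V_2^D$. The form identity is then literally the computation in (\ref{eSdrumce4;1}), since $\tilde a_n^D$ is just the restriction of $\tilde a_n$, and the manipulation there only uses the matrix entries $b_{kl}^D$ and the bilinearity of the integral; the coefficient matrix $C$ is carried along unchanged through Lemma~\ref{ldrumce501} exactly as in Section~\ref{Sdrumce5}. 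Once the intertwining at the form level holds, Proposition~\ref{pdrumce202} yields $\Phi^D \, \widetilde S^{D,1}_t = \widetilde S^{D,2}_t \, \Phi^D$, and invertibility of $B^D$ (which one checks directly, e.g.\ by computing its determinant) upgrades this to the similarity $\widetilde S^{D,1}_t = (\Phi^D)^{-1} \, \widetilde S^{D,2}_t \, \Phi^D$.

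The step I expect to be the genuine combinatorial obstacle — the \emph{only} step that is not a transcription of the Neumann case — is the pair of inclusions $\Phi^D(V_1^D) \subset V_2^D$ and $(\Phi^D)^*(V_2^D) \subset V_1^D$. Here the additional Dirichlet constraints (three vanishing traces on each of $\Gamma_1, \Gamma_2, \Gamma_3$) must be shown to be respected by the signed matrix, and it is precisely the sign pattern of $B^D$ that makes this work: on each edge the three components that are required to vanish appear in the relevant rows with coefficients that cancel in pairs, while the two matching-trace conditions are preserved. Concretely, I would take an arbitrary $u = (u_1,\ldots,u_7) \in V_1^D$, substitute its trace relations into the expressions $(\Phi^D u)_k = \sum_l b_{kl}^D \, u_l$ restricted to each edge, and verify edge by edge that the defining equalities and vanishing conditions for $V_2^D$ come out, then repeat for $(\Phi^D)^*$ using $B^D$ acting by columns. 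This is the calculation the text flags as ``surprising but simple''; the surprise is exactly that the signs conspire correctly, and it is the one place where the Dirichlet construction is not a mechanical copy of the Neumann one. With all three conditions of \ref{pdrumce202-3} in hand, the conclusion $S^{D,1}_t = (U^D)^{-1} \, S^{D,2}_t \, U^D$ follows by composing the three similarities, and isospectrality is then read off from Corollary~\ref{cdrumce203} together with self-adjointness and compactness of the resolvent of the Dirichlet realizations.
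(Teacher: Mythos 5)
Your proposal is correct and follows essentially the same route as the paper: reduce to $L_2(T)^7$ via the unitaries $\Phi_n$, verify the three conditions of Proposition~\ref{pdrumce202}\ref{pdrumce202-3} for $\Phi^D$ (with the edge-by-edge sign check for $\Phi^D(V_1^D)\subset V_2^D$ and $(\Phi^D)^*(V_2^D)\subset V_1^D$ as the only genuinely new computation), and compose the resulting similarities. You have correctly identified the same key step the paper flags as ``surprising but simple,'' so nothing further is needed.
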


\section{Operators with Robin boundary conditions} \label{Sdrumce7}

If we again let $\Omega \subset \Ri^2$ be an arbitrary polygon, or more generally 
Lipschitz planar domain, with boundary $\Gamma$, then for all $\beta \in \Ri$ 
 we define a new form 
$a^\beta \colon H^1(\Omega) \times H^1(\Omega) \to \Ci$ by
\begin{equation}
a^\beta(u,v) = \int_\Omega \nabla u \cdot \overline{\nabla v} 
+ \beta \int_\Gamma u\,\overline{v}.
\label{eSdrumce7;1}
\end{equation}
It follows from the Trace Inequality that the form $a^\beta$ is continuous and 
$L_2(\Omega)$-elliptic for all $\beta \in \Ri$.
We denote by $-\Delta^\beta_\Omega$ the operator on $L_2(\Omega)$ associated 
with $a^\beta$ and call $\Delta^\beta_\Omega$ the {\bf Robin Laplacian} 
with boundary coefficient $\beta$, which has domain given by
\[
D(\Delta^\beta_\Omega) = \{u \in H^1(\Omega):\Delta u\in L_2(\Omega) 
\mbox{ and } \partial_\nu u+\beta u=0 \mbox{ on } \Gamma\}
. \]
In the boundary condition $\partial_\nu u+\beta u=0$, the normal derivative 
$\partial_\nu u$ is defined by {\rm (\ref{eSdrumce3;0})}, as in the case of the 
Neumann Laplacian, and by $u$ we mean the trace of $u$ on $\Gamma$.
As is true of its Dirichlet and Neumann counterparts, the Robin 
Laplacian is self-adjoint and has compact resolvent, and its negative is bounded 
from below.
We denote  by $S^\beta$ the semigroup generated by $\Delta^\beta_\Omega$.
When $\beta=0$ we recover the Neumann Laplacian, and for 
$\beta \in (0,\infty)$, the Robin Laplacian `interpolates' between the 
Dirichlet and Neumann Laplacians in a strong sense
\cite{AW}.

The boundary condition $\partial_\nu u+\beta u=0$ corresponds to an 
`elastically supported membrane'.
So if we interpret the Dirichlet boundary condition as representing a drum 
with a taut membrane and the Neumann condition naively as representing 
a gong, then the Robin condition describes a drum whose membrane is 
not properly attached to the body of the drum, but rather allowed to 
move a little as the membrane vibrates.

Our goal is to show that no operator formed as a sum of superimposing isometries 
between component triangles of $\Omega_1$ and $\Omega_2$ (as in the 
Neumann and Dirichlet cases) can intertwine the Robin Laplacians 
$\Delta^\beta_{\Omega_1}$ and $\Delta^\beta_{\Omega_2}$ for any 
$\beta \neq 0$.
We make this statement precise by recalling some notation from 
Section~\ref{Sdrumce3}.
If we first consider $\Omega_1$, we recall that $\Phi_1 \colon L_2 (\Omega_1) \to 
L_2(T)^7$ is the unitary operator associated with the family of isometries 
$\tau_k \colon T \to T_k$, such that
\[
\Phi_1(w) = (w|_{T_1} \circ \tau_1,\ldots, w|_{T_7} \circ \tau_7)
\]
for all $w \in L_2(\Omega_1)$, and moreover $\Phi_1 (H^1(\Omega_1)) = V_1$.
Note that since the Robin Laplacian has the same form domain as the Neumann 
Laplacian, $\Phi_1$ is still the correct operator to use in this case.
However, we now wish to consider the image of $\partial\Omega_1$, the 
boundary of $\Omega_1$, under the isometries $\tau_k$.
We write
\[
\Gamma^1_k := \tau_k^{-1}(\partial \Omega_1 \cap \overline T_k)
\]
for all $k \in \{ 1,\ldots,7 \} $.
Then $\Gamma^1_k \subset \Gamma_1 \cup \Gamma_2 \cup \Gamma_3$; 
for example, $\Gamma^1_1 = \emptyset$ and $\Gamma^1_5 = \Gamma_1 
\cup \Gamma_3$.
(Cf. Figures~\ref{fdrumce1} and \ref{fdrumce2}.)
For fixed $\beta \in \Ri$ we now define a form $\tilde a^\beta_1 \colon  V_1 
\times V_1 \to \Ci$ by
\[
\tilde a^\beta_1 (\Phi_1 u, \Phi_1 v) = a^\beta_1(u,v)
\]
for $u,v \in H^1(\Omega_1)$.
Then
\[
\tilde a^\beta_1 (u,v) = \sum_{k=1}^7 \int_T \nabla u_k \cdot 
\overline{\nabla v_k} + \beta \int_{\Gamma^1_k} u_k \,\overline{v_k}
\]
for all $u = (u_1,\ldots, u_7), v = (v_1,\ldots, v_7) \in V_1$.
Note that  $\tilde a^0_1$ coincides with the form $\tilde a_1$ introduced in 
(\ref{epdrumce3b01}).
We do the same for $\Omega_2$, so that the unitary operator 
$\Phi_2 \colon  L_2(\Omega_2) \to L_2(T)^7$ intertwines the forms $a^\beta_2$ 
and $\tilde a^\beta_2 \colon  V_2 \times V_2 \to \Ci$ given by
\[
\tilde a^\beta_2 (u,v) = \sum_{k=1}^7 \int_T \nabla u_k \cdot 
\overline{\nabla v_k} + \beta \int_{\Gamma^2_k} u_k \,\overline{v_k}.
\]
For an arbitrary invertible matrix $P \colon  \Ri^7 \to \Ri^7$ given by 
$P=(p_{kl})$ we construct an associated operator $\Phi \colon  L_2(T)^7 \to 
L_2(T)^7$ by setting
\begin{equation}
(\Phi u)_k = \sum_{l=1}^7 p_{kl} \, u_l \qquad \mbox{and} \quad (\Phi^* u)_l 
= \sum_{k=1}^7 p_{kl} \, u_k,
\label{eSdrumce7;2}
\end{equation}
where $(u_1,\ldots,u_7) \in L_2(T)^7$.
We will prove that, for any $\beta \neq 0$, there is no matrix $P$ such that 
the associated operator $\Phi \colon  L_2(T)^7 \to L_2(T)^7$ satisfies $\Phi(V_1) 
\subset V_2$, $\Phi^*(V_2) \subset V_1$ and
\begin{equation}
\tilde a^\beta_2(\Phi u,v) = \tilde a^\beta_1(u, \Phi^* v)
\label{eSdrumce7;3}
\end{equation}
for all $u\in V_1$ and $v\in V_2$.
Since this is equivalent to the non-existence of an operator 
$U = \Phi_2^{-1} \, \Phi \, \Phi_1 \colon  L_2(\Omega_1) \to L_2(\Omega_2)$ intertwining 
$a^\beta_1$ and $a^\beta_2$, the impossibility of (\ref{eSdrumce7;3}) 
then implies via Proposition~\ref{pdrumce202} that the Robin Laplacians 
cannot be intertwined by an operator expressible as a sum of isometries 
between the triangles.
To that end, we first show that the question as to whether (\ref{eSdrumce7;3}) holds
is independent of the coefficient $\beta \neq 0$.

\begin{prop} \label{pdrumce701}
Let $\Phi \in \cl (L_2(T)^7, L_2(T)^7)$ be defined by 
{\rm (\ref{eSdrumce7;2})} and satisfy $\Phi(V_1) \subset V_2$ 
and $\Phi^*(V_2) \subset V_1$. 
If {\rm (\ref{eSdrumce7;3})}
holds for some $\beta \in \Ri \setminus \{0\}$ , then the same is 
true for all $\beta \in \mathbb{R}$.
\end{prop}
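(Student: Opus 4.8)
The plan is to separate each form $\tilde a^\beta_n$ into its interior (Dirichlet-energy) part and its boundary part, and to observe that the intertwining relation (\ref{eSdrumce7;3}) carries the parameter $\beta$ only through the boundary part. Writing $\tilde a^\beta_n = \tilde a^0_n + \beta\, b_n$, where $b_1(u,v) = \sum_{k=1}^7 \int_{\Gamma^1_k} u_k\,\overline{v_k}$ and $b_2(u,v) = \sum_{k=1}^7 \int_{\Gamma^2_k} u_k\,\overline{v_k}$, the identity (\ref{eSdrumce7;3}) for a given $\beta$ becomes
\[
\big[\tilde a^0_2(\Phi u, v) - \tilde a^0_1(u, \Phi^* v)\big] + \beta\,\big[b_2(\Phi u, v) - b_1(u, \Phi^* v)\big] = 0
\]
for all $u \in V_1$ and $v \in V_2$. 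I would abbreviate the two bracketed expressions as $G(u,v)$ and $H(u,v)$, so that (\ref{eSdrumce7;3}) reads $G + \beta H = 0$.

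First I would show that $G \equiv 0$, irrespective of the matrix $P$ and of $\beta$. Expanding using $(\Phi u)_k = \sum_l p_{kl}\, u_l$ and $(\Phi^* v)_l = \sum_k p_{kl}\, v_k$ (recall that $p_{kl} \in \Ri$), both $\tilde a^0_2(\Phi u, v)$ and $\tilde a^0_1(u, \Phi^* v)$ reduce to the same symmetric double sum $\sum_{k,l=1}^7 p_{kl} \int_T \nabla u_l \cdot \overline{\nabla v_k}$. This is exactly the calculation already carried out in (\ref{eSdrumce4;1}); the point worth stressing here is that it holds for \emph{any} matrix $P$ whatsoever. The interior parts of the two forms always intertwine, and they contribute nothing to the dependence on $\beta$.

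Consequently (\ref{eSdrumce7;3}) collapses to the single requirement $\beta\, H(u,v) = 0$ for all $u \in V_1$ and $v \in V_2$. For $\beta \neq 0$ this is equivalent to $H \equiv 0$, a condition that no longer involves $\beta$; for $\beta = 0$ it is automatic. Thus, if (\ref{eSdrumce7;3}) holds for some $\beta_0 \in \Ri \setminus \{0\}$, then $H$ vanishes identically on $V_1 \times V_2$, and hence $G + \beta H = 0$ holds for every $\beta \in \Ri$, which is the assertion.

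There is no serious analytic obstacle: the entire content lies in the observation that the gradient terms intertwine automatically, so that the discrepancy between the two Robin forms under $\Phi$ is a purely boundary phenomenon that scales linearly (indeed homogeneously) in $\beta$. The only minor point to verify with care is the bookkeeping in the expansion of $G$, in particular that the realness of the entries $p_{kl}$ makes $\tilde a^0_2(\Phi u, v)$ and $\tilde a^0_1(u, \Phi^* v)$ literally equal rather than complex conjugates of one another.
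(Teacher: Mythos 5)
Your proof is correct and is essentially the paper's own argument: both expand $\tilde a^\beta_2(\Phi u,v)$ and $\tilde a^\beta_1(u,\Phi^*v)$ into a common gradient double sum plus $\beta$ times a boundary discrepancy, observe that the gradient parts coincide identically (as in (\ref{eSdrumce4;1})), and conclude that the remaining boundary condition is homogeneous in $\beta$, so its validity for one nonzero $\beta$ gives it for all $\beta$. Your explicit remark that the realness of the $p_{kl}$ is what makes the two gradient sums literally equal is a correct and worthwhile point of care.
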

\begin{proof}
This follows easily from the definition (\ref{eSdrumce7;2}) of the operator $\Phi$.
Just as in the Neumann case (cf.~(\ref{eSdrumce4;1})), we have
\begin{equation}
\tilde a^\beta_2(\Phi u, v)=  \sum_{k=1}^7 \sum_{l=1}^7 p_{kl} 
\Big( \int_T \nabla u_l \cdot \overline{\nabla v_k}
   +\beta\int_{\Gamma^2_k}u_l \, \overline{v_k}\Big),
\label{pdrumce701-2}
\end{equation}
while
\begin{equation}
\tilde a^\beta_1(u, \Phi^* v) = \sum_{k=1}^7 \sum_{l=1}^7 p_{kl} 
\Big( \int_T \nabla u_l \cdot \overline{\nabla v_k}
   +\beta\int_{\Gamma^1_l}u_l \, \overline{v_k}\Big).
\label{pdrumce701-3}
\end{equation}
By assumption, the two are equal, and so
\begin{equation}
\beta \sum_{k=1}^7 \sum_{l=1}^7 p_{kl} \Big(\int_{\Gamma^2_k}u_l \, \overline{v_k}
  - \int_{\Gamma^1_l} u_l \, \overline{v_k} \Big)=0.
\label{pdrumce701-4}
\end{equation}
Since $\beta \neq 0$, if we take any other $\beta_0 \in \Ri$ and multiply 
(\ref{pdrumce701-4}) by $\beta_0/\beta$, we see from (\ref{pdrumce701-2}) 
and (\ref{pdrumce701-3}) applied to $\beta_0$ that (\ref{eSdrumce7;3}) 
must hold for $\beta_0$.
\end{proof}

Our next result, which applies to any bounded Lipschitz domains $\omega_1$ and 
$\omega_2$ in $\Ri^d$, 
states that if a unitary operator $U$ intertwines two 
Robin Laplacians for two separate values of $\beta\in\Ri$, then the {\em same} 
operator intertwines the Robin Laplacians for {\em all} values of $\beta\in\Ri$, 
including the Neumann Laplacians, as well as the Dirichlet Laplacians, and 
also acts isometrically on the traces of functions in $H^1(\omega_1)$.

\begin{prop} \label{pdrumce702}
Let $\omega_1$ and $\omega_2$ be bounded Lipschitz domains in $\Ri^d$.
For all $\beta \in \Ri$ denote by $a^\beta_1$ and $a^\beta_2$ the 
forms given by {\rm (\ref{eSdrumce7;1})} on $\omega_1$ and $\omega_2$.
Suppose $U \in \cl(L_2(\omega_1), L_2(\omega_2))$ is unitary, with 
$U (H^1(\omega_1)) = H^1(\omega_2)$. The following statements 
are equivalent.
\begin{tabeleq}
\item \label{pdrumce702-1} 
There exist $\beta_1,\beta_2 \in \Ri$ with 
$\beta_1 \neq \beta_2$ such that
\[
a^{\beta_n}_1 (u,v) = a^{\beta_n}_2(U u, U v)
\]
for all $u,v\in H^1(\omega_1)$ and $n \in \{ 1,2 \} $.
\item \label{pdrumce702-2} 
The operator $U$ intertwines the Neumann Laplacians on 
$\omega_1$ and $\omega_2$. Moreover, if $u,v \in H^1(\omega_1)$, then 
\begin{equation}
	\int_{\partial\omega_1} u\, \overline v = \int_{\partial\omega_2} (U u) \, \overline{(U v)},
\label{pdrumce702-3}
\end{equation}
where by $u$ we mean the trace of $u$, etc.
\end{tabeleq}
Moreover, if these equivalent conditions are satisfied, then 
$U(H^1_0(\omega_1))=H^1_0(\omega_2)$ and $U$ intertwines the 
Dirichlet Laplacians on $\omega_1$ and $\omega_2$.
\end{prop}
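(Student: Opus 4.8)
The plan is to prove the equivalence \ref{pdrumce702-1}$\Leftrightarrow$\ref{pdrumce702-2} and then derive the final statement about the Dirichlet Laplacians. The key algebraic observation is that the form $a^\beta$ decomposes as $a^\beta(u,v) = a^0(u,v) + \beta\, b(u,v)$, where $a^0$ is the Neumann form and $b(u,v) = \int_\Gamma u\,\overline v$ is the boundary pairing. Thus the condition $a^{\beta}_1(u,v) = a^{\beta}_2(Uu,Uv)$ reads $a^0_1(u,v) - a^0_2(Uu,Uv) + \beta\bigl(b_1(u,v) - b_2(Uu,Uv)\bigr) = 0$. The two hypothesized values $\beta_1 \neq \beta_2$ give two such linear relations in the single variable $\beta$; since the Vandermonde-type $2\times 2$ system with distinct $\beta_1,\beta_2$ is nonsingular, both the Neumann part $a^0_1(u,v) = a^0_2(Uu,Uv)$ and the boundary part $b_1(u,v) = b_2(Uu,Uv)$ must vanish separately. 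This is exactly \ref{pdrumce702-2}. Conversely, if both parts hold, one recombines them with any $\beta$ to recover \ref{pdrumce702-1} for every $\beta$ (not just two values), which is the stronger conclusion advertised in the paragraph preceding the proposition.

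For the equivalence of the \emph{Neumann} intertwining statement with the form identity $a^0_1 = a^0_2(U\cdot,U\cdot)$, I would invoke Corollary~\ref{cdrumce204}: since $U$ is unitary with $U(H^1(\omega_1)) = H^1(\omega_2)$, intertwining the Neumann semigroups is equivalent to the form equality on the common form domain. The boundary identity \eqref{pdrumce702-3} is then precisely $b_1(u,v) = b_2(Uu,Uv)$, i.e.\ $U$ acting isometrically on traces. I would first establish the relation for a single fixed value, say $v = u$, using polarization to pass between the sesquilinear and quadratic versions, since the forms are sesquilinear over $\Ci$ and the real parameter $\beta$ couples only to the diagonal behaviour; polarization recovers the full sesquilinear statement from the quadratic one.

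For the \textbf{final assertion}, I would use Lemma~\ref{ldrumce601}, which characterizes $H^1_0(\omega)$ as $\{u \in H^1(\omega) : \Tr u = 0\}$. Given the trace isometry \eqref{pdrumce702-3}, I would show $U$ maps trace-zero functions to trace-zero functions: if $\Tr u = 0$ then $\int_{\partial\omega_2} (Uu)\,\overline{(Uu)} = \int_{\partial\omega_1} u\,\overline u = 0$, forcing $\Tr(Uu) = 0$, hence $Uu \in H^1_0(\omega_2)$. Applying the same argument to $U^{-1} = U^*$ (which inherits the dual trace identity) gives the reverse inclusion, so $U(H^1_0(\omega_1)) = H^1_0(\omega_2)$. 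The Dirichlet intertwining then follows by restricting the already-established Neumann form identity $a^0_1(u,v) = a^0_2(Uu,Uv)$ to $H^1_0(\omega_1)\times H^1_0(\omega_1)$ and invoking Corollary~\ref{cdrumce204} once more on the Dirichlet form domains.

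The \emph{main obstacle} I anticipate is the careful handling of the boundary integral as a genuine sesquilinear form on traces rather than merely a real quadratic expression: extracting $b_1 = b_2(U\cdot,U\cdot)$ as a sesquilinear identity from the two scalar constraints requires a clean polarization argument, and one must be attentive that the trace map is only bounded (not an isometry a priori) so the identity \eqref{pdrumce702-3} genuinely encodes new information about $U$. The linear-algebra step separating the Neumann and boundary contributions is routine given $\beta_1 \neq \beta_2$; the delicate point is ensuring the trace-isometry conclusion is strong enough to invoke Lemma~\ref{ldrumce601} and transfer to the Dirichlet setting.
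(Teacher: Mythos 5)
Your proposal is correct and follows essentially the same route as the paper: subtract the two form identities for $\beta_1\neq\beta_2$ to separate the gradient and boundary terms, invoke Corollary~\ref{cdrumce204} for the Neumann equivalence, and use Lemma~\ref{ldrumce601} together with the trace isometry (applied also to $U^{-1}=U^*$) to transfer to $H^1_0$ and the Dirichlet Laplacians. The polarization step you anticipate is not needed, since hypothesis \ref{pdrumce702-1} is already assumed for all pairs $u,v$, so the sesquilinear identities separate pointwise in $(u,v)$ exactly as in the paper.
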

\begin{proof}
`\ref{pdrumce702-1}$\Rightarrow$\ref{pdrumce702-2}'.
By writing out the form condition in \ref{pdrumce702-1} for $\beta_1$ and 
$\beta_2$ and taking the difference of the two expressions, we obtain 
directly that
\begin{equation}
\int_{\omega_1} \nabla u \cdot \overline{\nabla v}
= \int_{\omega_2} \nabla (U u)\cdot \overline{\nabla (U v)}
\quad\mbox{and}\quad
\int_{\partial\omega_1}u\, \overline v 
= \int_{\partial\omega_2} (U u) \, \overline{(U v)}
\label{pdrumce702-4}
\end{equation}
for all $u,v \in H^1(\omega_1)$.
It follows immediately from Corollary~\ref{cdrumce204} that $U$ 
intertwines the Neumann Laplacians on $\omega_1$ and $\omega_2$.

`\ref{pdrumce702-2}$\Rightarrow$\ref{pdrumce702-1}'.
Fix $\beta \in \Ri$ and $u,v \in H^1(\omega_1)$.
Since $U$ intertwines the Neumann Laplacians, by Corollary~\ref{cdrumce204} 
it intertwines the associated forms.
Therefore
\begin{equation}
\int_{\omega_1} \nabla u \cdot \overline{\nabla v}
= \int_{\omega_2} \nabla (U u) \cdot \overline{\nabla (U v)}.
\label{pdrumce702-5}
\end{equation}
Moreover, since by assumption (\ref{pdrumce702-3}) holds, it follows directly 
from the definition (\ref{eSdrumce7;1}) of $a^{\beta_n}$ that 
\ref{pdrumce702-1} holds for all $\beta_1,\beta_2 \in \Ri$.

Finally, to prove the last assertion, suppose that $w \in H^1_0(\omega_1)$.
Then (\ref{pdrumce702-4}) applied to $u=v=w$ implies that
\[
\int_{\partial\omega_2} |U w|^2 = \int_{\partial\omega_1} |w|^2 = 0.
\]
By Lemma~\ref{ldrumce601}, it follows that $U w \in H^1_0(\Omega_2)$.
Thus $U(H^1_0(\omega_1)) \subset H^1_0(\omega_2)$.
Since $U^{-1} = U^*$ has exactly the same properties as $U$, an identical  
argument shows that $U^{-1}(H^1_0(\omega_2)) \subset H^1_0(\omega_1)$ 
and therefore $U(H^1_0(\omega_1)) = H^1_0(\omega_2)$.
Moreover, it is clear that $U|_{H^1_0(\omega_1)}$ is still a continuous 
linear bijection, and since (\ref{pdrumce702-5}) holds for all 
$u,v \in H^1_0(\omega_1) \subset H^1(\omega_1)$, by 
Corollary~\ref{cdrumce204} this means $U$ intertwines the Dirichlet 
Laplacians.
\end{proof}

We now show that no one operator $\Phi$ of the form (\ref{eSdrumce7;2}) 
can simultaneously intertwine the Dirichlet and Neumann Laplacians, 
which is a noteworthy  
observation in its own right.
It is also worth noting that it can be proved by observing that the families of 
matrices $\widehat B = \alpha \one- \gamma B$ and $\widehat B^D := \alpha 
\one- \gamma B^D$, for nontrivial combinations of $\alpha$ and $\gamma$,
are the only ones giving rise to operators intertwining the Neumann and Dirichlet 
Laplacians, respectively, and they have no matrix in common.
However, we give a different proof based on reflections.
The principle is that, if one reflects a triangle $T$ along one of its sides, and 
wishes to reflect functions in $H^1(T)$ across to the larger domain, one does 
so by taking even reflections along the common line.
But to preserve $H^1_0(T)$ the reflection should be odd.

\begin{prop} \label{pdrumce703}
No invertible operator $\Phi \colon  L_2(T)^7 \to L_2(T)^7$ of the form 
{\rm (\ref{eSdrumce7;2})} simultaneously satisfies the Neumann condition
\[
\Phi(V_1) \subset V_2 \quad \mbox{and} \quad \Phi^*(V_2) \subset V_1
\]
and the Dirichlet condition
\[
\Phi(V_1^D) \subset V_2^D \quad \mbox{and} \quad \Phi^*(V_2^D) \subset V_1^D.
\]
\end{prop}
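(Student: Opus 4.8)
The plan is to reduce the four containment requirements to a finite system of linear equations on the entries $p_{kl}$ of the defining matrix $P$, and then to exhibit the incompatibility between the Neumann and Dirichlet systems directly. The reduction rests on the fact that a function in $H^1(T)$ may have its traces on the three sides $\Gamma_1,\Gamma_2,\Gamma_3$ prescribed essentially independently (they interact only at the three corners, a null set for $\sigma$). Consequently each of the conditions $\Phi(V_1)\subset V_2$, $\Phi^*(V_2)\subset V_1$, $\Phi(V_1^D)\subset V_2^D$ and $\Phi^*(V_2^D)\subset V_1^D$ decouples into three conditions, one per side, obtained by substituting the defining trace relations of the source space and demanding that the defining relations of the target hold for all remaining free traces; this turns each containment into linear relations among the $p_{kl}$. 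The structural point, which is the reflection principle flagged above, is that on each of the six interior edges the gluing relations are the same equalities $u_k=u_l$ for both boundary conditions (even reflection), whereas on each boundary segment the Neumann space imposes nothing (a free, even extension) while the Dirichlet space forces vanishing (an odd extension). Thus the Dirichlet conditions are obtained from the Neumann ones by setting the boundary traces to zero in both the source and the target.

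Next I would identify the solution sets by unfolding each propeller along its interior edges into a tree of reflected copies of $T$: the central triangle at depth $0$, its three neighbours at depth $1$, and the three leaves at depth $2$, the two trees differing only in the side-labels of the depth-$1$-to-$2$ edges. Under the even convention every copy carries the same sign, and the Neumann conditions (together with the adjoint ones) are satisfied exactly by the two-parameter family $\alpha\,\one+\beta B$. Under the odd convention the copy at depth $j$ carries the sign $(-1)^{j}$; writing $d=(1,-1,-1,-1,1,1,1)$ for this sign pattern and $D$ for the diagonal matrix with these entries, one finds $B^D=-D\,B\,D$, and the Dirichlet conditions are satisfied exactly by the family $\gamma\,d\,d^{\,t}+\delta B^D$. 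Note that here the invariant ``constant'' direction is the signed all-ones $d\,d^{\,t}$, not $\one$, which is the concrete trace of the odd reflection. A common invertible $P$ would then satisfy $\alpha\,\one+\beta B=\gamma\,d\,d^{\,t}+\delta B^D$. Comparing entries at two positions where $B$ and $B^D$ both vanish, one with $d_kd_l=+1$ (e.g.\ $(1,1)$) and one with $d_kd_l=-1$ (e.g.\ $(2,5)$), forces $\alpha=\gamma$ and $\alpha=-\gamma$, hence $\alpha=\gamma=0$; comparing a tree edge (unflipped, e.g.\ $(1,2)$) with a non-tree edge (sign-flipped, e.g.\ $(2,3)$) then forces $\beta=\delta=0$. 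So the only common matrix is $P=0$, which is not invertible, and the claim follows.

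The main obstacle is establishing the two family characterizations: that the even- and odd-reflection conditions, once the adjoint containments are also imposed, admit nothing in $M_7(\Ri)$ beyond these two planes. This is elementary but lengthy bookkeeping across three sides, two domains and two adjoints, and it is precisely here that the reflection viewpoint earns its keep, since it lets one read each constraint as an even-or-odd matching across a single reflection rather than as an opaque linear relation. Two points require care: first, that the free boundary traces are genuinely independent of the glued ones, so that the per-side decoupling in the first step is legitimate; and second, that the Dirichlet invariant direction is the twisted all-ones $d\,d^{\,t}$ rather than $\one$, this being the reason the two planes meet only at the origin. One could instead bypass the full characterization and extract just enough of the Neumann and Dirichlet relations to force $P$ non-invertible, but organizing that extraction is no less work than the reflection bookkeeping.
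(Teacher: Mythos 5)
Your overall strategy differs from the paper's and your endgame is sound: the identity $B^D=-DBD$ with $D=\mathrm{diag}(1,-1,-1,-1,1,1,1)$ checks out, the signed all-ones matrix $d\,d^{\,t}$ does map $V_1^D$ into $V_2^D$ (whereas $\one$ does not, so your ``twisted'' invariant direction is in fact a correction of the paper's own parenthetical remark, which states the Dirichlet family as $\alpha\one-\gamma B^D$), and the entry-by-entry comparison at $(1,1)$, $(2,5)$, $(1,2)$, $(2,3)$ does force $P=0$. The problem is that everything rests on the two exact characterizations --- that the Neumann containments admit \emph{only} $\alpha\,\one+\beta B$ and the Dirichlet containments \emph{only} $\gamma\,d\,d^{\,t}+\delta B^D$ --- and you explicitly defer these as ``elementary but lengthy bookkeeping.'' That deferral is the proof. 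The containments are linear conditions on the $49$ entries of $P$, and the claim that the solution space collapses to a plane in each case is a statement about some $47$ independent constraints; nothing in your write-up establishes it, and if either solution space were larger your concluding intersection argument would no longer apply as stated. As it stands the proposal is a plausible plan with its central lemma unproven.

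It is also worth noting that the full characterizations are much more than the statement requires, and the paper's proof shows how little is actually needed. Taking $w\in C_c^\infty(T\cup\Gamma_3)$ with $w\not\equiv 0$ on $\Gamma_3$ and testing $u=(0,w,0,\dots,0)\in V_1$ against $\Phi(V_1)\subset V_2$ gives $p_{42}=p_{12}$; testing $v=(w,0,0,w,0,0,0)\in V_2^D$ against $\Phi^*(V_2^D)\subset V_1^D$ then gives $2p_{12}\,w=0$ on $\Gamma_3$, hence $p_{12}=0$. Repeating this over the sides kills every entry of $P$ outside the permutation support $S=\{(1,1),(2,4),(3,2),(4,3),(5,6),(6,7),(7,5)\}$, and then $u=(w,0,0,w,0,0,0)\in V_1$ forces $p_{11}\,w=0$ on $\Gamma_3$, contradicting invertibility. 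This extracts perhaps a dozen constraints rather than the full solution sets, so your closing remark that the direct extraction ``is no less work'' than the reflection bookkeeping is not borne out. If you want to keep your route, you must actually carry out the unfolding argument that pins down both families; otherwise the short direct extraction is the efficient way to close the proof.
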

\begin{proof}
Assume $\Phi$ is associated with $P = (p_{kl}) \colon \Ri^7 \to \Ri^7$.

Consider $m := p_{12}$.
Let $w \in C_c^\infty(T \cup \Gamma_3)$ be such that $w$ does not vanish 
identically on $\Gamma_3$. 
If we define $u = (0,w,0,\ldots,0)$, then it is easily checked that $u \in V_1$. 
Moreover, we have $(\Phi u)_1 = m \, w$ and $(\Phi u)_4 = p_{42} \, w$, using 
the definition {\rm (\ref{eSdrumce7;2})} of $\Phi$.
But since $\Phi u \in V_2$, we must have $(\Phi u)_1 = (\Phi u)_4$ on 
$\Gamma_3$ in the sense of traces.
Since $w \not\equiv 0$ on $\Gamma_3$, this means $p_{42}=m$.
Alternatively, choose $v = (w,0,0,w,0,0,0)$. 
Then $v \in V_2^D$.
Moreover, $\Phi^* v = (0, 2m \, w, 0,0,0,0,0)$.
But $\Phi^* v \in V_1^D$ by assumption.
So $2 m \, w$ vanishes on $\Gamma_3$.
This implies that $p_{12} = m = 0$.

Arguing similarly, it follows that $p_{kl} = 0$ for all 
$(k,l) \in \{ 1,\ldots,7 \} ^2 \setminus S$, where
$S = \{ (1,1), (2,4), (3,2), (4,3), (5,6), (6,7), (7,5) \} $.
Since $P$ is invertible, one has $p_{kl} \neq 0$ for all 
$(k,l) \in S$.
Then 
\[
\Phi u 
= (p_{11} \, u_1, 
   p_{24} \, u_4, 
   p_{32} \, u_2, 
   p_{43} \, u_3, 
   p_{56} \, u_6, 
   p_{67} \, u_7, 
   p_{75} \, u_5)
.  \]
If $w$ is as above, but one chooses this time
$u = (w,0,0,w,0,0,0)$, then $u \in V_1$.
So $\Phi u \in V_2$ by assumption.
Hence $(\Phi u)_1 = (\Phi u)_4$ on $\Gamma_3$, which implies that 
$p_{11} \, w = 0$ on $\Gamma_3$. 
This is a contradiction.
\end{proof}

Note that the same proof also works if $P$ has complex coefficients.
Our main result, that the Robin Laplacians on $\Omega_1$ and $\Omega_2$ 
are not intertwined by any operator acting as a linear combination of isometries 
between triangles, now follows easily.

\begin{thm} \label{tdrumce701}
Suppose $\beta \neq 0$. 
Then there does not exist an invertible operator $\Psi \colon  L_2(\Omega_1) 
\to L_2(\Omega_2)$ of the form $\Psi = \Phi_2^{-1} \, \Phi \, \Phi_1$, where 
$\Phi \colon  L_2(T)^7 \to L_2(T)^7$ is of the form {\rm (\ref{eSdrumce7;2})}, 
which intertwines $\Delta^\beta_{\Omega_1}$ and $\Delta^\beta_{\Omega_2}$.
\end{thm}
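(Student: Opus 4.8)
The plan is to argue by contradiction at the level of the matrix-induced operator $\Phi$ on $L_2(T)^7$, reducing the Robin hypothesis to the incompatible pair of Neumann and Dirichlet conditions that Proposition~\ref{pdrumce703} forbids. So suppose $\Psi = \Phi_2^{-1}\,\Phi\,\Phi_1$ is invertible and intertwines $\Delta^\beta_{\Omega_1}$ and $\Delta^\beta_{\Omega_2}$ for some $\beta \neq 0$. First I would transfer this to $\Phi$. Since the Robin Laplacians are self-adjoint, the square root property holds, so by Proposition~\ref{pdrumce202} (as already noted in the discussion preceding Proposition~\ref{pdrumce701}) the semigroup intertwining is equivalent to the form identity; conjugating by the unitaries $\Phi_1,\Phi_2$ then yields $\Phi(V_1)\subset V_2$, $\Phi^*(V_2)\subset V_1$ and (\ref{eSdrumce7;3}) for this $\beta$. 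Invertibility of $\Psi$ forces $\Phi = \Phi_2\,\Psi\,\Phi_1^{-1}$ to be invertible, which is exactly what Proposition~\ref{pdrumce703} requires.

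Next I would invoke Proposition~\ref{pdrumce701} to promote (\ref{eSdrumce7;3}) from the single value $\beta\neq 0$ to every $\beta\in\Ri$; in particular $\beta=0$ gives the Neumann form intertwining, and comparison of (\ref{pdrumce701-2}) with (\ref{pdrumce701-3}) produces the trace identity (\ref{pdrumce701-4}). Dividing by $\beta\neq 0$, this reads
\[
\sum_{k=1}^7\sum_{l=1}^7 p_{kl}\Big(\int_{\Gamma^2_k}u_l\,\overline{v_k}-\int_{\Gamma^1_l}u_l\,\overline{v_k}\Big)=0
\]
for all $u=(u_1,\ldots,u_7)\in V_1$ and $v=(v_1,\ldots,v_7)\in V_2$. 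The Neumann inclusions are already in hand by the first step, so only the Dirichlet inclusions remain to be produced before Proposition~\ref{pdrumce703} can be applied.

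The heart of the argument, and the step I expect to be the main obstacle, is extracting $\Phi(V_1^D)\subset V_2^D$ and $\Phi^*(V_2^D)\subset V_1^D$ from this trace relation. Here I would first record, via Lemma~\ref{ldrumce601}, that $V_n^D=\{u\in V_n : u_l=0 \text{ on } \Gamma^n_l \text{ for all } l\}$, because $\Phi_n$ carries the boundary trace of $w\in H^1(\Omega_n)$ on $\partial\Omega_n\cap\overline{T_l}$ to the trace of $u_l$ on $\Gamma^n_l$. Now take $u\in V_1^D$, so that $u_l$ vanishes on $\Gamma^1_l$ for every $l$; the second group of integrals above then disappears and the relation collapses to
\[
\sum_{k=1}^7\int_{\Gamma^2_k}(\Phi u)_k\,\overline{v_k}=0\qquad\text{for all } v\in V_2.
\]
Since $\Phi u\in V_2$ (Neumann inclusion), the function $g:=\Phi_2^{-1}\Phi u$ lies in $H^1(\Omega_2)$, and with $v=\Phi_2 w'$ the essentially disjoint covering $\partial\Omega_2=\bigcup_k\tau_k(\Gamma^2_k)$ lets me reassemble the left-hand side as $\int_{\partial\Omega_2}(\Tr g)\,\overline{\Tr w'}$. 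As $w'$ ranges over $H^1(\Omega_2)$ the traces $\Tr w'$ fill a dense subset of $L_2(\partial\Omega_2)$, so $\Tr g=0$, whence $g\in H^1_0(\Omega_2)$ and $\Phi u\in V_2^D$. The symmetric computation, now starting from $v\in V_2^D$ so that instead the \emph{first} group of integrals vanishes, yields $\Phi^*(V_2^D)\subset V_1^D$.

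Finally, $\Phi$ is then an invertible operator of the form (\ref{eSdrumce7;2}) satisfying both the Neumann and the Dirichlet inclusions, which directly contradicts Proposition~\ref{pdrumce703}; hence no such $\Psi$ can exist. The delicate point throughout is precisely the density-of-traces step: one must ensure that the admissible test functions $v_k$ arising from $v\in V_2$ genuinely exhaust a dense subspace of $L_2(\partial\Omega_2)$, so that the orthogonality relation forces the trace of $\Phi u$ to vanish rather than merely constraining it. Everything else is bookkeeping once the identifications $V_n^D=\{u\in V_n: u_l|_{\Gamma^n_l}=0\}$ are in place.
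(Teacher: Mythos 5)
Your argument is correct, but it follows a genuinely different route from the paper's. The paper first unitarizes: it takes the polar decomposition of the matrix $P$ (justified by Proposition~\ref{pdrumce205}, since the Robin Laplacians are self-adjoint) so that $\Phi$ and $\Psi$ may be assumed unitary; the form condition then comes from Corollary~\ref{cdrumce204}, Proposition~\ref{pdrumce701} upgrades it to all $\beta$, and Proposition~\ref{pdrumce702} --- a statement about unitary intertwiners on arbitrary Lipschitz domains --- delivers the Neumann intertwining, the trace isometry, and hence the Dirichlet intertwining via the quadratic identity $\int_{\partial\omega_2}|Uw|^2=\int_{\partial\omega_1}|w|^2=0$ for $w\in H^1_0(\omega_1)$ together with Lemma~\ref{ldrumce601}; Proposition~\ref{pdrumce703} then gives the contradiction. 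You instead skip the unitarization and work throughout with the possibly non-unitary $\Phi$, extracting the Dirichlet inclusions $\Phi(V_1^D)\subset V_2^D$ and $\Phi^*(V_2^D)\subset V_1^D$ directly from the bilinear trace relation (\ref{pdrumce701-4}) restricted to $V_n^D=\{u\in V_n: u_l=0 \mbox{ on } \Gamma^n_l\}$, which in effect re-proves the needed fragment of Proposition~\ref{pdrumce702} without unitarity. Two remarks. First, the price of skipping the polar decomposition is that your opening step --- passing from ``$\Psi$ intertwines the Robin semigroups'' to condition \ref{pdrumce202-3} of Proposition~\ref{pdrumce202} --- requires the implication \ref{pdrumce202-1}$\Rightarrow$\ref{pdrumce202-3}, which the paper only asserts (via the square root property, automatic for self-adjoint operators) and explicitly declines to prove; your proof therefore rests on a true but unproved-in-the-paper equivalence, whereas the paper's unitary reduction sidesteps it. Second, your density-of-traces step, while valid for Lipschitz domains (traces of $H^1$ fill $H^{1/2}(\partial\Omega_2)$, dense in $L_2(\partial\Omega_2)$), is unnecessary: since $\Phi u\in V_2$ you may simply test with $v=\Phi u$, obtaining $\sum_k\int_{\Gamma^2_k}|(\Phi u)_k|^2=0$ directly --- which is exactly the positivity trick the paper uses --- and likewise with $u=\Phi^*v$ for the adjoint inclusion. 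With that simplification the two proofs converge on the same mechanism; what your version buys is that it never needs $\Phi$ to be unitary, and what the paper's buys is a cleaner logical dependency (only Corollary~\ref{cdrumce204} and the general Proposition~\ref{pdrumce702}) at the cost of the polar decomposition reduction.
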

\begin{proof}
Suppose that there does exist such a $\Psi$, and therefore a $\Phi$ associated with 
some invertible operator $P \colon \Ri^7 \to \Ri^7$. 
By using the polar decomposition of $P$ (see Section~\ref{Sdrumce4}), we may 
assume without loss of generality that $P$ and therefore also $\Phi$ and $\Psi$ are 
unitary.
By Proposition~\ref{pdrumce701}, the map $\Phi$ satisfies (\ref{eSdrumce7;3}) 
for all $\beta \in \Ri$ and therefore $\Psi$ intertwines both the Neumann 
and Dirichlet Laplacians on $\Omega_1$ and $\Omega_2$  by Proposition~\ref{pdrumce702}.
But this contradicts Proposition~\ref{pdrumce703}.
\end{proof}

It is clear that the same method of proof works not only for more general elliptic operators, 
but also for all known planar counterexamples, and indeed, should still be true for 
all pairs of (Dirichlet or Neumann) isospectral domains for which Sunada's principle 
applies.
In particular, there are no known pairs of noncongruent domains for which the Robin 
Laplacians are isospectral (for any $\beta \neq 0$), and there is no reason 
to suppose that any known Dirichlet or Neumann counterexamples 
have this property.

\subsection*{Acknowledgements}
The authors wish to thank Moritz Gerlach for providing the pictures.
The second named author is most grateful for the hospitality extended
to him during a fruitful stay at the University of Ulm.
He wishes to thank the University of Ulm for financial support.
Part of this work is supported by the Marsden Fund Council from Government funding, 
administered by the Royal Society of New Zealand.
The third named author is supported by a fellowship of the 
Alexander von Humboldt Foundation, Germany.


\begin{thebibliography}{ABHN}

\bibitem[Alt]{Alt}
{\sc Alt, H.~W.}, {\em Lineare Funktionalanalysis}.
\newblock Springer-Verlag, Berlin etc., 1985.

\bibitem[Are]{Are3}
{\sc Arendt, W.}, Does diffusion determine the body?
\newblock {\em J. Reine Angew.\ Math.} {\bf 550} (2002),  97--123.

\bibitem[ABHN]{ABHN}
{\sc Arendt, W., Batty, C., Hieber, M. {\rm and} Neubrander, F.}, {\em
  Vector-valued Laplace transforms and Cauchy problems}, vol.\ 96 of Monographs
  in Mathematics.
\newblock Birkh{\"a}user, Basel, 2001.

\bibitem[ABE]{ABE}
{\sc Arendt, W., Biegert, M. {\rm and} Elst, A. F.~M. ter}, Diffusion
  determines the manifold.
\newblock {\em J. Reine Angew.\ Math.} {\bf 667} (2012),  1--25.

\bibitem[AE]{AE4}
{\sc Arendt, W. {\rm and} Elst, A. F.~M. ter}, Diffusion determines the compact
  manifold.
\newblock {\em Banach Center Publ.} (2012).
\newblock In press, arXiv: 1104.1012.

\bibitem[AW]{AW}
{\sc Arendt, W. {\rm and} Warma, M.}, Dirichlet and Neumann boundary
  conditions: What is in between?
\newblock {\em J. Evol.\ Equ.} {\bf 3} (2003),  119--135.

\bibitem[AT]{AT5}
{\sc Auscher, P. {\rm and} Tchamitchian, P.}, Square roots of elliptic second
  order divergence operators on strongly Lipschitz domains: $L^2$ theory.
\newblock {\em J. Anal. Math.} {\bf 90} (2003),  1--12.

\bibitem[B\'er1]{Ber1}
{\sc B{\'e}rard, P.}, Transplantation et isospectralit\'e. I.
\newblock {\em Math. Ann.} {\bf 292} (1992),  547--559.

\bibitem[B\'er2]{Ber2}
\leavevmode\vrule height 2pt depth -1.6pt width 23pt, Domaines plans
  isospectraux \`a la Gordon-Webb-Wolpert: une preuve \'el\'emen\-taire.
\newblock {\em Afrika Mat. {\rm (3)}} {\bf 1} (1993),  135--146.

\bibitem[BCDS]{BCDS}
{\sc Buser, P., Conway, J., Doyle, P. {\rm and} Semmler, K.-D.}, Some planar
  isospectral domains.
\newblock {\em Internat. Math. Res. Notices} {\bf 9} (1994),  391--400.

\bibitem[Cha]{Cha}
{\sc Chapman, S.~J.}, Drums that sound the same.
\newblock {\em Amer. Math. Monthly} {\bf 102} (1995),  124--138.

\bibitem[GWW]{GWW}
{\sc Gordon, C., Webb, D.~L. {\rm and} Wolpert, S.}, One cannot hear the shape
  of a drum.
\newblock {\em Bull.\ Amer.\ Math.\ Soc.} {\bf 27} (1992),  134--138.

\bibitem[Kac]{Kac}
{\sc Kac, M.}, Can one hear the shape of a drum?
\newblock {\em Amer.\ Math.\ Monthly} {\bf 73} (1966),  1--23.

\bibitem[McI]{McI1}
{\sc McIntosh, A.}, On the comparability of $A^{1 / 2}$ and $A^{*1 / 2}$.
\newblock {\em Proc.\ Amer.\ Math.\ Soc.} {\bf 32} (1972),  430--434.

\bibitem[Pro]{Pro}
{\sc Protter, M.~H.}, Can one hear the shape of a drum? revisited.
\newblock {\em SIAM Rev.} {\bf 29} (1987),  185--197.

\bibitem[Sun]{Sun1}
{\sc Sunada, T.}, Riemannian coverings and isospectral manifolds.
\newblock {\em Ann. of Math.} {\bf 121} (1985),  169--186.

\bibitem[Ura]{Ura}
{\sc Urakawa, H.}, Bounded domains which are isospectral but not congruent.
\newblock {\em Ann.\ Sci.\ \'Ecole Norm.\ Sup.\ (4)} {\bf 15} (1982),
  441--456.

\bibitem[Wat]{Wat}
{\sc Watanabe, K.}, Plane domains which are spectrally determined.
\newblock {\em Ann. Global Anal. Geom.} {\bf 18} (2000),  447--475.

\bibitem[Yos]{Yos}
{\sc Yosida, K.}, {\em Functional Analysis}.
\newblock Sixth edition, Grundlehren der mathematischen Wissenschaften 123.
  Springer-Verlag, New York etc., 1980.

\bibitem[Zel1]{Zel}
{\sc Zelditch, S.}, Spectral determination of analytic bi-axisymmetric plane
  domains.
\newblock {\em Geom.\ Funct.\ Anal.} {\bf 10} (2000),  628--677.

\bibitem[Zel2]{Zel2}
\leavevmode\vrule height 2pt depth -1.6pt width 23pt, Inverse spectral problem
  for analytic domains. II. $\Zi_2$-symmetric domains.
\newblock {\em Ann. of Math.} {\bf 170} (2009),  205--269.

\end{thebibliography}
\end{document}